\documentclass[a4paper, 11pt
]{article} 


\usepackage[latin1]{inputenc}
\usepackage{subfigure}
\usepackage{amsmath}
\usepackage{amsfonts}
\usepackage{amssymb}
\usepackage{amsthm}
\usepackage{multicol}
\usepackage{array}
\usepackage{tikz}
\usetikzlibrary{intersections}
\usetikzlibrary{calc}
\usepackage{fullpage}

%


\newtheorem{lem}{Lemma}[section]
\newtheorem{coro}{Corollary}[section]
\newtheorem{prop}{Proposition}[section]
\newtheorem{thm}{Theorem}[section]

\newcommand{\sgn}{\operatorname{sgn}}
\newcommand{\complexes}{\mathbb{C}}
\newcommand{\integers}{\mathbb{Z}}
\newcommand{\naturals}{\mathbb{N}}

\author{Wenjie Fang \footnote{This work is partially supported by ANR IComb (ANR-08-JCJC-0011) and ANR Cartaplus (ANR-12-JS02-001-01).} \\ LIAFA, Universit\'e Paris Diderot - Paris 7 \\ B\^atiment Sophie Germain, 75205 Paris Cedex 13, France}

\title{A generalization of the quadrangulation relation to constellations and hypermaps}


\begin{document}
\maketitle
\begin{abstract}
Constellations and hypermaps generalize combinatorial maps, \textit{i.e.} embedding of graphs in a surface, in terms of factorization of permutations. In this paper, we extend a result of Jackson and Visentin (1990) stating an enumerative relation between quadrangulations and bipartite quadrangulations. We show a similar relation between hypermaps and constellations by using a result of Littlewood on factorization of characters. A combinatorial proof of Littlewood's result is also given. Furthermore, we show that coefficients in our relation are all positive integers, hinting possibility of a combinatorial interpretation. Using this enumerative relation, we recover a result on the asymptotic behavior of hypermaps in Chapuy (2009).
\end{abstract}

\section{Introduction} \label{sec:intro}
Maps are combinatorial structures describing an embedding of a graph in a surface. They can be encoded as factorizations of the identity element in the symmetric group. Enormous efforts have been devoted to the enumeration of these combinatorial objects and their variants, see \textit{e.g.} \cite{lando2004graphs, bousquet2000enumeration, bouttier2004planar} and references therein. In \cite{jackson1999combinatorial}, the following strikingly simple enumerative relation was established:
\begin{displaymath}
E^{(g)}_{n,D} = \sum_{i=0}^{g} 4^{g-i} B^{(g-i, 2i)}_{n,D} = 4^g B^{(g,0)}_{n,D} + 4^{g-1} B^{(g-1, 2)}_{n,D} + \ldots + B^{(0,2g)}_{n,D} .
\end{displaymath}
Here, for $D \subseteq \naturals^{+}$, we define $B^{(g,k)}_{n,D}$ as the number of rooted bipartite maps of genus $g$ with every face degree of the form $2d$ and $d \in D$, whose vertices are colored black and white, rooted in a white vertex and with $n$ edges such that $k$ black vertices are marked. The number $E^{(g)}_{n,D}$ is the counterpart for rooted (non necessarily bipartite) maps with the same restriction on face degrees without marking. In the planar case, we have $E^{(0)}_{n,D} = B^{(0,0)}_{n,D}$, meaning that a planar map with all faces of even degree is always bipartite. The situation in higher genera is more complicated, and every map whose faces are all of even degree is not always bipartite. Figure \ref{fig:grid-torus} gives such an example of a $5 \times 6$ rectangular grid on a torus, which is a quadrangulation but not bipartite.

\begin{figure}
\begin{center}
\begin{tikzpicture}[scale=0.75]
\clip (-2.8, -2.3) rectangle (10, 2.3);
\begin{scope}
\foreach \x in {-2.4, -1.6, ..., 2.4} \draw (\x, -2) -- (\x, 2);
\foreach \y in {-2, -1.2, ..., 2.1} \draw (-2.4, \y) -- (2.4, \y);
\foreach \x in {-2.4, -1.6, ..., 2.4} \foreach \y in {-2, -1.2, ..., 2.1} \fill[black] (\x, \y) circle (2pt);
\draw[dashed, latex-latex] (-2.4, 0) .. controls +(135:2) and +(45:2) .. (2.4, 0);
\draw[dashed, latex-latex] (0.4, -2) .. controls +(-45:1.7) and +(45:1.7) .. (0.4, 2);
\draw[very thick, -latex] (2.8,0) -- (4.4,0);
\end{scope}
\begin{scope}[xshift=7cm]
\draw[name path = c1] (0,0) circle (1cm);
\draw[name path = c2, densely dotted] (0,0) circle (1.3cm);
\draw[name path = c3] (0,0) circle (1.6cm);
\draw[name path = c4, densely dotted] (0,0) circle (1.9cm);
\draw[name path = c5] (0,0) circle (2.2cm);
\draw[name path = v1] (0:1) .. controls +(90:0.5) and +(90:0.5) .. (0:2.2);
\draw[name path = t1, densely dotted] (0:1) .. controls +(-90:0.5) and +(-90:0.5) .. (0:2.2);
\draw[name path = v2] (60:1) .. controls +(150:0.5) and +(150:0.5) .. (60:2.2);
\draw[name path = t2, densely dotted] (60:1) .. controls +(-30:0.5) and +(-30:0.5) .. (60:2.2);
\draw[name path = v3] (120:1) .. controls +(210:0.5) and +(210:0.5) .. (120:2.2);
\draw[name path = t3, densely dotted] (120:1) .. controls +(30:0.5) and +(30:0.5) .. (120:2.2);
\draw[name path = v4] (180:1) .. controls +(270:0.5) and +(270:0.5) .. (180:2.2);
\draw[name path = t4, densely dotted] (180:1) .. controls +(90:0.5) and +(90:0.5) .. (180:2.2);
\draw[name path = v5] (240:1) .. controls +(-30:0.5) and +(-30:0.5) .. (240:2.2);
\draw[name path = t5, densely dotted] (240:1) .. controls +(150:0.5) and +(150:0.5) .. (240:2.2);
\draw[name path = v6] (300:1) .. controls +(30:0.5) and +(30:0.5) .. (300:2.2);
\draw[name path = t6, densely dotted] (300:1) .. controls +(210:0.5) and +(210:0.5) .. (300:2.2);

\fill[black, name intersections={of= c1 and v1}] (intersection-1) circle (2pt);
\fill[black, name intersections={of= c3 and v1}] (intersection-1) circle (2pt);
\fill[black, name intersections={of= c5 and v1}] (intersection-1) circle (2pt);
\fill[black, name intersections={of= c1 and v2}] (intersection-1) circle (2pt);
\fill[black, name intersections={of= c3 and v2}] (intersection-1) circle (2pt);
\fill[black, name intersections={of= c5 and v2}] (intersection-1) circle (2pt);
\fill[black, name intersections={of= c1 and v3}] (intersection-1) circle (2pt);
\fill[black, name intersections={of= c3 and v3}] (intersection-1) circle (2pt);
\fill[black, name intersections={of= c5 and v3}] (intersection-1) circle (2pt);
\fill[black, name intersections={of= c1 and v4}] (intersection-1) circle (2pt);
\fill[black, name intersections={of= c3 and v4}] (intersection-1) circle (2pt);
\fill[black, name intersections={of= c5 and v4}] (intersection-1) circle (2pt);
\fill[black, name intersections={of= c1 and v5}] (intersection-1) circle (2pt);
\fill[black, name intersections={of= c3 and v5}] (intersection-1) circle (2pt);
\fill[black, name intersections={of= c5 and v5}] (intersection-1) circle (2pt);
\fill[black, name intersections={of= c1 and v6}] (intersection-1) circle (2pt);
\fill[black, name intersections={of= c3 and v6}] (intersection-1) circle (2pt);
\fill[black, name intersections={of= c5 and v6}] (intersection-1) circle (2pt);

\fill[black, name intersections={of= c2 and t1}] (intersection-1) circle (2pt);
\fill[black, name intersections={of= c4 and t1}] (intersection-1) circle (2pt);
\fill[black, name intersections={of= c2 and t2}] (intersection-1) circle (2pt);
\fill[black, name intersections={of= c4 and t2}] (intersection-1) circle (2pt);
\fill[black, name intersections={of= c2 and t3}] (intersection-1) circle (2pt);
\fill[black, name intersections={of= c4 and t3}] (intersection-1) circle (2pt);
\fill[black, name intersections={of= c2 and t4}] (intersection-1) circle (2pt);
\fill[black, name intersections={of= c4 and t4}] (intersection-1) circle (2pt);
\fill[black, name intersections={of= c2 and t5}] (intersection-1) circle (2pt);
\fill[black, name intersections={of= c4 and t5}] (intersection-1) circle (2pt);
\fill[black, name intersections={of= c2 and t6}] (intersection-1) circle (2pt);
\fill[black, name intersections={of= c4 and t6}] (intersection-1) circle (2pt);
\end{scope}
\end{tikzpicture}
\end{center}
\caption{An example of a $5 \times 6$ grid on a torus} \label{fig:grid-torus}
\end{figure}
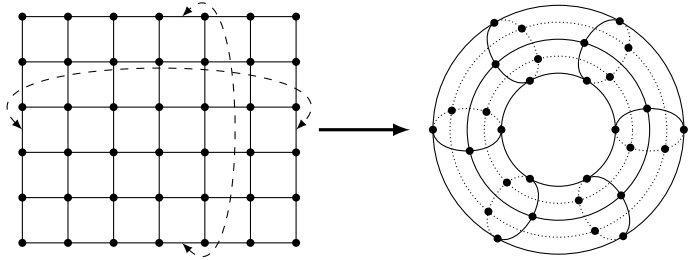

The special case on $D=\{ 2 \}$ had been proved in \cite{jackson1990character}, and the maps in concern are quadrangulations, which gives this special case the name \emph{quadrangulation relation}. It had been then extended to $D= \{ p \}$ in \cite{jackson1990character-1}. Despite its nice form, the combinatorial meaning of the quadrangulation relation remains unclear, though some effort is done in \cite{jackson1999combinatorial} to explore properties of the possible hinted bijection.

In enumeration of maps, there is a recurrent phenomenon: results on bipartite maps can often be generalized to constellations (see \textit{e.g.} \cite{bousquet2000enumeration, bouttier2004planar, poulalhon2002factorizations}). In fact, since bipartite maps are exactly $2$-constellations, constellations can be seen as a generalization of bipartite maps. In the same spirit, we will generalize the quadrangulation relation to $m$-constellations and $m$-hypermaps. See Section \ref{sec:defs-maps} for the definitions of these notions. As an example, our result in the case $m=3$ gives rise to the following relation (\textit{c.f.} Corollary \ref{coro:counting-relation-m-3-4}):
\begin{displaymath}
H^{(g)}_{n,3,D} = \sum_{i=0}^{g} 3^{2g-2i} \sum_{l=0}^{2i} \frac{2^{l+1} - (-1)^{l+1}}{3} C^{(g-i, l, 2i-l)}_{n,3,D}.
\end{displaymath}
Here, $C^{(g, a, b)}_{n,3,D}$ is the number of rooted 3-constellations with $n$ hyperedges, and hyperface degree restricted by the set $D$, with $a$ marked vertices of color 1 and $b$ marked vertices of color 2. The number $H^{(g)}_{n,3,D}$ is the counterpart for rooted 3-hypermaps without marking. We also give the same type of relation for general $m$. While our generalization of the quadrangulation relation still has a simple form, it involves extra coefficients in the weighted sum. Explicit expressions of these coefficients are given in Corollary \ref{coro:explicit-coeffs} using symmetries in $m$-constellations. We then establish Theorem \ref{thm:positivity-of-differential-operator-coefficient} stating that these coefficients are all positive integers, revealing the possibility that a potential combinatorial interpretation exists for our relation. Finally, we recover a relation between the asymptotic behavior of $m$-constellations and $m$-hypermaps in \cite{chapuy2009asymptotic}, which can be seen as an asymptotic version of our relation.

Given a partition $\mu \vdash n$, we note $m\mu$ the partition obtained by multiplying every part in $\mu$ by $m$. In \cite{jackson1990character}, the quadrangulation relation was obtained using a factorization of irreducible characters of the symmetric group on partitions of the form $[(mk)^n]$ using a notion called $m$-balanced partition, which is a special case of a more general result for characters evaluated on partitions of the form $m\mu$ in an article of Littlewood \cite{littlewood1951modular}. For the sake of self-containedness, a combinatorial proof of this result is given here.

\section{Preliminaries} \label{sec:defs}

\subsection{Constellations and hypermaps} \label{sec:defs-maps}
A \emph{map} $M$ is an embedding of a connected graph $G$, with possibly multi-edges or loops, into a closed, connected and oriented surface $S$ such that all \emph{faces}, \textit{i.e.} components of $S \setminus M$, are topological disks. Maps are defined up to orientation-preserving homeomorphisms. We define the genus $g$ of a map to be that of the surface it is embedded into. We thus have the Euler relation $|V|-|E|+|F| = 2 - 2g$.

Following \cite{lando2004graphs, chapuy2009asymptotic}, we now define two special kinds of maps. An \emph{$m$-hypermap} is a map with two types of faces, \emph{hyperedges} with degree $m$ and \emph{hyperfaces} with degree divisible by $m$, such that every edge is located between a hyperedge and a hyperface. Each edge is then naturally oriented with the hyperedge on its right. Conventionally hyperedges are colored black and hyperfaces white. An \emph{$m$-constellation} is an $m$-hypermap with the additional condition that all vertices are colored with an integer between $1$ and $m$ in a fashion that every hyperedge has its vertices colored by $1, 2, \ldots, m$ in clockwise order. A map with faces of even degree can be considered as a $2$-hypermap by replacing every edge with a $2$-hyperedge, and a bipartite map can be considered as a $2$-constellation in the same way. A \emph{rooted} $m$-hypermap is an $m$-hypermap with a distinguished edge. Rooted $m$-constellations are similarly defined, with the convention that the starting vertex of the root in natural orientation has color $1$. We consider only rooted $m$-hypermaps and rooted $m$-constellations hereinafter. 

Figure \ref{fig:hypermap} provides an example of planar 3-hypermap, which is also a planar 3-constellation with the given vertex coloring. More generally, every planar $m$-hypermap can have its vertices colored to become an $m$-constellation, that is to say, every planar $m$-hypermap can be considered as an $m$-constellation. This is because the only obstacle for an $m$-hypermap to be an $m$-constellation is the existence of a cycle whose length is not divisible by $m$, but in the planar case we can easily show that such a cycle does not exist using the Jordan curve theorem. However, this is not necessarily true for higher genera, in which an $m$-hypermap does not necessarily have a coloring that conforms with the additional condition to be an $m$-constellation. An example for $m=2$ is presented in Figure \ref{fig:grid-torus}, and similar examples can be constructed for general $m$.

\begin{figure}[!htbp]
\begin{center}
\begin{tikzpicture}
\path (0,0) coordinate (a);
\path (0.5,1) coordinate (b);
\path (1,0.3) coordinate (c);
\path (-1,-1) coordinate (d);
\path (0,-2) coordinate (e);
\path (0.6,-1) coordinate (f);
\path (1,-0.7) coordinate (g);
\path (2,-1) coordinate (h);
\path (2,0) coordinate (i);
\path (2,1) coordinate (j);
\path (3,1) coordinate (k);
\path (1.5,1) coordinate (l);
\clip (-1.1, -2.1) rectangle (4.05, 1.55);
\filldraw[fill=gray!50, draw=black] (a) -- (b) -- (c) -- cycle;
\filldraw[fill=gray!50, draw=black] (a) -- (d) -- (e) -- cycle;
\filldraw[fill=gray!50, draw=black] (e) -- (f) -- (h) -- cycle;
\filldraw[fill=gray!50, draw=black] (c) -- (g) -- (h) -- cycle;
\filldraw[fill=gray!50, draw=black] (c) -- (l) -- (i) -- cycle;
\filldraw[fill=gray!50, draw=black] (k) -- (j) -- (i) -- cycle;
\filldraw[fill=gray!50, draw=black] (a) .. controls +(120:1.2) and +(165:1.5) .. (j) .. controls +(20:2) and +(30:2) .. (i) .. controls +(20:7) and +(160:4) .. (a);
\draw[thick] (a) -- (d);
\draw[thick, -latex] (d) -- +(45:0.8);
\end{tikzpicture}
\begin{tikzpicture}
\path (0,0) coordinate (a);
\path (0.5,1) coordinate (b);
\path (1,0.3) coordinate (c);
\path (-1,-1) coordinate (d);
\path (0,-2) coordinate (e);
\path (0.6,-1) coordinate (f);
\path (1,-0.7) coordinate (g);
\path (2,-1) coordinate (h);
\path (2,0) coordinate (i);
\path (2,1) coordinate (j);
\path (3,1) coordinate (k);
\path (1.5,1) coordinate (l);
\clip (-1.1, -2.15) rectangle (6.55, 1.55);
\filldraw[fill=gray!50, draw=black] (a) -- (b) -- (c) -- cycle;
\filldraw[fill=gray!50, draw=black] (a) -- (d) -- (e) -- cycle;
\filldraw[fill=gray!50, draw=black] (e) -- (f) -- (h) -- cycle;
\filldraw[fill=gray!50, draw=black] (c) -- (g) -- (h) -- cycle;
\filldraw[fill=gray!50, draw=black] (c) -- (l) -- (i) -- cycle;
\filldraw[fill=gray!50, draw=black] (k) -- (j) -- (i) -- cycle;
\filldraw[fill=gray!50, draw=black] (a) .. controls +(120:1.2) and +(165:1.5) .. (j) .. controls +(20:2) and +(30:2) .. (i) .. controls +(20:7) and +(160:4) .. (a);
\draw[thick] (a) -- (d);
\draw[thick, -latex] (d) -- +(45:0.8);
\filldraw[draw=black, fill=white] (a) circle (0.1);
\filldraw[draw=black, fill=white] (0.4, 0.9) rectangle(0.6, 1.1) ;
\filldraw[draw=black, fill=black] (c) circle (0.1);
\filldraw[draw=black, fill=black] (d) circle (0.1);
\filldraw[draw=black, fill=white] (-0.1, -2.1) rectangle (0.1, -1.9);
\filldraw[draw=black, fill=black] (f) circle (0.1);
\filldraw[draw=black, fill=white] (0.9, -0.8) rectangle (1.1, -0.6);
\filldraw[draw=black, fill=white] (h) circle (0.1);
\filldraw[draw=black, fill=white] (1.9,-0.1) rectangle (2.1, 0.1);
\filldraw[draw=black, fill=black] (j) circle (0.1);
\filldraw[draw=black, fill=white] (k) circle (0.1);
\filldraw[draw=black, fill=white] (l) circle (0.1);

\node[shape=circle,draw,inner sep=0.3pt] (he1) at (barycentric cs:a=1,b=1,c=1) {\scriptsize{1}};
\node[shape=circle,draw,inner sep=0.3pt] (he2) at (barycentric cs:a=1,d=1,e=1) {\scriptsize{2}};
\node[shape=circle,draw,inner sep=0.3pt] (he3) at (barycentric cs:e=1,f=1,h=1) {\scriptsize{3}};
\node[shape=circle,draw,inner sep=0.3pt] (he4) at (barycentric cs:c=1,g=1,h=1) {\scriptsize{4}};
\node[shape=circle,draw,inner sep=0.3pt] (he5) at (barycentric cs:c=1,l=1,i=1) {\scriptsize{5}};
\node[shape=circle,draw,inner sep=0.3pt] (he6) at (barycentric cs:k=1,j=1,i=1) {\scriptsize{6}};
\node[shape=circle,draw,inner sep=0.3pt] (he7) at (2,1.2) {\scriptsize{7}};

\node at (4.5,-0.5) {$\bullet$ \scriptsize{$ \sigma_1 = (1,5,4)(2)(3)(6,7) $}};
\node at (4.5,-0.9) {$\circ$ \scriptsize{$ \sigma_2 = (1,2,7)(3,4)(5)(6) $}};
\node at (4.5,-1.3) {\scriptsize{$ \Box \; \sigma_3 = (1)(2,3)(4)(5,6,7) $}};
\node[black] at (4.5,-1.7) {\scriptsize{$ \phi = (1,6)(2,5)(3,4)(7) $}};
\end{tikzpicture}
\end{center}
\caption{An example of rooted planar 3-hypermap} \label{fig:hypermap}
\end{figure}
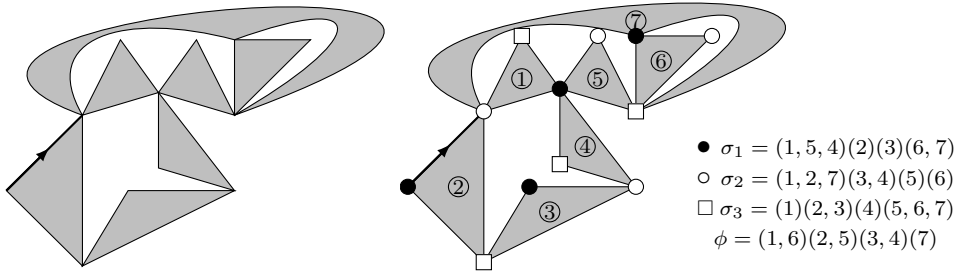

We now define the generating series of $m$-hypermaps and $m$-constellations. We use $\underline{\mathbf{x}}$ to denote a sequence of variables $x_1, \ldots, x_m$, and $[x_i \gets f(i)]$  to denote the substitution of $\underline{\mathbf{x}}$ by $x_i = f(i)$. We also introduce an infinite set of variables $\underline{\mathbf{y}} = y_1, y_2, \ldots$. We define $H(x, \underline{\mathbf{y}}, z, u)$ to be the ordinary generating series of rooted $m$-hypermaps, with $x$ marking the number of vertices, $y_i$ the number of hyperfaces of degree $mi$ for each $i$, $z$ the number of hyperedges and $u$ twice the genus. Similarly, we define $C(\underline{\mathbf{x}}, \underline{\mathbf{y}}, z, u)$ to be the ordinary generating series of rooted $m$-constellations, except that with $x_i$ we mark the number of vertices with color~$i$.

In the following, we take the convention of composing permutations from left to right, considering them as elements of the symmetric group.

Let $S_n$ be the symmetric group consisting of permutations of $n$ elements. A \emph{$k$-factorization of the identity} (or simply \emph{$k$-factorization}) in $S_n$ is a  $k$-tuple of permutations $(\sigma_1, \ldots, \sigma_k)$ in $S_n$ such that $\sigma_1 \cdots \sigma_k = id$. Such a factorization is \emph{transitive} if the family acts transitively on $\{1, \ldots, n\}$. There is a $1$-to-$(n-1)!$ correspondence between rooted $m$-constellations with $n$ hyperedges and transitive $(m+1)$-factorizations in $S_n$ (\textit{c.f.} \cite{lando2004graphs}). More precisely, we first number the hyperedges from $1$ to $n$, with the convention that the rooted hyperedge receives the number $1$. There are $(n-1)!$ ways for such numbering. Then for each color $i$, we construct the permutation $\sigma_i$ whose cycles record hyperedges incident to each vertex of color $i$, in clockwise order. For hyperfaces, we construct the permutation $\phi$ whose cycles record hyperedges incident to each hyperface with an edge from color $m$ to color $1$, in clockwise order. We can simply verify that $\sigma_1 \cdots \sigma_m \phi = id$, which is an $(m+1)$-factorization. Figure~\ref{fig:hypermap} provides an example of such a factorization on a $3$-constellation. Since  the constellation is connected, the $(m+1)$-factorization obtained is always transitive. Each numbering of hyperedges produces one such factorization, hence the $1$-to-$(n-1)!$ correspondence.

Similarly, by numbering corners of faces instead of hyperedges, with a similar construction, we have a $1$-to-$(n-1)! m^{n-1}$ correspondence between rooted $m$-hypermaps with $n$ hyperedges and transitive 3-factorizations in $S_{mn}$ with cycle lengths in $\sigma_1$ all divisible by $m$.

For $n \geq 1$ and $\lambda$ a partition of $n$, we note $C_\lambda$ the set of permutations with cycle type $\lambda$. For a permutation $\phi$, we note $l(\phi)$ the number of cycles in $\phi$. For a partition $\mu = [1^{m_1}2^{m_2}\ldots]$, we note $y_\mu=\prod_{i>0} y_i^{m_i}$. We can now define the following generating series $R_H$ of (not necessarily transitive) $3$-factorizations $\sigma \tau \phi = id$ in $S_{mn}$ with cycle lengths in $\phi$ all divisible by $m$ and $\tau = (1,2,\ldots,m)(m+1,\ldots,2m)\ldots((n-1)m+1, \ldots, mn)$ a fixed partition of cycle type $[m^n]$, with $x$ marking the number of cycles in $\sigma$, $y$ the cycle type of $\phi$ and $z$ the size of the symmetric group:
\begin{displaymath}
R_H(x,\underline{\mathbf{y}},z) = \sum_{n \geq 1} \frac{z^n}{n!} \sum_{\mu \vdash n} \sum_{\substack{\sigma \tau \phi = id_{mn} \\ \phi \in C_{m\mu}}} x^{l(\sigma)} y_{\mu}.
\end{displaymath}
Similarly, we can also define the generating series $R_C$ of (not necessarily transitive) $(m+1)$-factorizations $\sigma_1 \cdots \sigma_m \phi = id$ in $S_n$, with $x_i$ marking the number of cycles in $\sigma_i$, $y$ the cycle type of $\phi$ and $z$ the size of the symmetric group:  
\begin{displaymath}
R_C(\underline{\mathbf{x}},\underline{\mathbf{y}},z) = \sum_{n \geq 1} \frac{z^n}{n!} \sum_{\mu \vdash n} \sum_{\substack{\sigma_1 \ldots \sigma_m \phi = id_n \\ \phi \in C_{\mu}}} y_{\mu} \prod_{i=1}^{m} x_i^{l(\sigma_i)}.
\end{displaymath}

By taking the logarithm of the corresponding generating series, we can pass from general $k$-factorizations to transitive ones (see, \textit{e.g.}, \cite{jackson1990character}). Using Euler's formula, we can now easily verify the following relations concerning generating series $H,C$ of $m$-hypermaps and $m$-constellations, and $R_H, R_C$ defined above:

\begin{equation} \label{eq:H-to-RH}
H(x,\underline{\mathbf{y}},z,u) = mu^2 \left( z \frac{\partial}{\partial z} (\log R_H) \right) (xu^{-1}, [y_i \gets y_i u^{-1}], \frac{1}{m}zu^{m-1}),
\end{equation}
\begin{equation} \label{eq:C-to-RC}
C(\underline{\mathbf{x}},\underline{\mathbf{y}},z,u) = u^2 \left( z \frac{\partial}{\partial z} (\log R_C) \right) ([x_i \gets x_i u^{-1}], [y_i \gets y_i u^{-1}], zu^{m-1}).
\end{equation}

In an algebraic point of view, the series $R_H$ and $R_C$ are much easier to manipulate than $H$ and $C$. To investigate the link between $m$-hypermaps and $m$-constellations, we will start by analyzing $R_H$ and $R_C$ using the group algebra of the symmetric group.

\subsection{Characters and group algebra of the symmetric group}

The group algebra $\complexes S_n$ of the symmetric group $S_n$ is a complex vector space with a canonical basis indexed by elements of $S_n$ and a multiplication of elements extending distributively the group law of $S_n$. The center of the group algebra $\complexes S_n$, noted as $Z(\complexes S_n)$, is the subalgebra of $\complexes S_n$ consisting of elements that commute with any element in $\complexes S_n$. 

For $\theta$ a partition of $n$ (noted as $\theta \vdash n$), we define $K_\theta$ to be the formal sum of elements in $S_n$ with cycle type $\theta$. The elements $(K_\theta)_{\theta \vdash n}$ form a linear basis of $Z( \complexes S_n)$. According to the classic representation theory (\textit{c.f.} Chapter~2.5 in \cite{serre1977linear}), $Z( \complexes S_n)$ has another linear basis $(F_\theta)_{\theta \vdash n}$ formed by orthogonal idempotents.

For a partition $\lambda = [1^{m_1} 2^{m_2} \ldots] \vdash n$ in which $i$ appears $m_i$ times, we note $z_\lambda = \prod_{i > 0} i^{m_i} m_{i}!$, and we know that $n! z_{\lambda}^{-1}$ is the number of permutations of cycle type $\lambda$. We denote by $\chi^{\lambda}_{\theta}$ the irreducible character indexed by $\lambda$ evaluated on the conjugacy class of cycle type $\theta$, and by $f^\lambda = \chi^{\lambda}_{[1^n]}$ the dimension of the irreducible representation indexed by $\lambda$ (\textit{c.f.} \cite{stanley2001enumerative}). The change of basis between $(K_\theta)_{\theta \vdash n}$ and $(F_\theta)_{\theta \vdash n}$ is thus given by (\textit{c.f.} Chapter~2.5 in \cite{serre1977linear})
\[ F_\lambda = f^{\lambda} (n!)^{-1} \sum_{\theta \vdash n} \chi^{\lambda}_{\theta}K_\theta, \quad K_\lambda = n! z_{\lambda}^{-1} \sum_{\theta \vdash n} \chi^{\theta}_{\lambda} (f^{\theta})^{-1} F_\theta. \]

We now work in $ Z(\complexes S_n) $. For arbitrary partitions $\alpha, \beta^{(1)}, \ldots, \beta^{(k)}$ of $n$, we consider the coefficient of $K_\alpha$ in $K_{\beta^{(1)}} \cdots K_{\beta^{(k)}}$ expressed as a vector in $ Z(\complexes S_n) $ under the basis $(K_\theta)_{\theta \vdash n}$. This coefficient, noted as $[K_\alpha]K_{\beta^{(1)}} \cdots K_{\beta^{(k)}}$, can be interpreted as the number of factorizations $\tau_1 \cdots \tau_k \sigma = id$ with $\tau_i$ of cycle type $\beta^{(i)}$ for each $1 \leq i \leq k$ and $\sigma$ a fixed permutation of cycle type $\alpha$. With this interpretation, using the change of basis between $(K_\theta)_{\theta \vdash n}$ and $(F_\theta)_{\theta \vdash n}$ given above and the fact that $(F_\theta)_{\theta \vdash n}$ are orthogonal idempotents, we have

\[
\sum_{\substack{\tau_1 \cdots \tau_k \sigma = id \\ \sigma \in C_\alpha \mathrm{fixed}, \forall i, \tau_i \in C_{\beta^{(i)}}}} 1  = \sum_{\theta \vdash n} \left( n! \right)^{-1} \left( f^{\theta} \right)^{1-k} \chi_{\alpha}^{\theta} \prod_{i=1}^{k} \left( n! z_{\beta^{(i)}} \chi_{\beta^{(i)}}^{\theta} \right).
\]

We have just reproved a specialization of a well-known formula that is often contributed to Frobenius (\textit{c.f.} Appendix~A in \cite{lando2004graphs}), with which we can rewrite $R_H$ and $R_C$ as following.

\begin{equation} \label{eq:RH-in-all-characters}
R_H(x,\underline{\mathbf{y}},z) = \sum_{n \geq 1} \frac{z^n}{n!} \sum_{\lambda \vdash mn, \mu \vdash n} n! z_{\lambda}^{-1} z_{m\mu}^{-1} x^{l(\lambda)} y_{\mu} \sum_{\theta \vdash mn} \frac{1}{f^{\theta}} \chi_{\lambda}^{\theta} \chi_{[m^n]}^{\theta} \chi_{m\mu}^{\theta}
\end{equation}
\begin{equation} \label{eq:RC-in-all-characters}
R_C(\underline{\mathbf{x}},\underline{\mathbf{y}},z) = \sum_{n \geq 1} \frac{z^n}{n!} \sum_{\lambda^{(1)}, \ldots, \lambda^{(m)}, \mu \vdash n} \left( \prod_{i=1}^{m} x_i^{l(\lambda_{(i)})} \right) y_{\mu} \sum_{\theta \vdash n} (f^{\theta})^{(1-m)} z_{\mu}^{-1} \chi^{\theta}_{\mu} \prod_{i=1}^{k} n! z_{\lambda^{(i)}}^{-1} \chi^{\theta}_{\lambda^{(i)}}
\end{equation}

To further simplify the expressions above, we define the \emph{rising factorial function} $x^{(n)} = x(x+1) \cdots (x+n-1)$ for $n \in \naturals$. For a partition $\theta$, we define the polynomial $H_\theta(x)$ as $\prod_{i=1}^{l(\theta)} (x-i+1)^{(\theta_i)}$, where $l(\theta)$ is the number of parts in $\theta$. With this notation, we give the following expressions of $R_H$ and $R_C$.

\begin{prop} \label{prop:series-in-big-characters-simplified}
We can rewrite $R_H$ and $R_C$ as follows:
\[ R_H(x,\underline{\mathbf{y}},z) = \sum_{n \geq 1} \frac{z^n}{n!} \sum_{\mu \vdash n} m^{-l(\mu)} z_{\mu}^{-1} y_{\mu} \sum_{\theta \vdash mn} \chi_{[m^n]}^{\theta} \chi_{m\mu}^{\theta} H_{\theta}(x), \]
\[ R_C(\underline{\mathbf{x}},\underline{\mathbf{y}},z) = \sum_{n \geq 1} \frac{z^n}{n!} \sum_{\mu \vdash n} y_{\mu}  z_{\mu}^{-1} \sum_{\theta \vdash n} f^{\theta}  \chi^{\theta}_{\mu} \left( \prod_{i=1}^{m} H_\theta(x_i) \right). \]
\end{prop}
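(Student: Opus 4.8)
The plan is to observe that in both character expressions (\ref{eq:RH-in-all-characters}) and (\ref{eq:RC-in-all-characters}) the variable $x$ (respectively the variables $x_1,\dots,x_m$) enters only through the factor $x^{l(\lambda)}$ (respectively $x_i^{l(\lambda^{(i)})}$), and the partitions carrying these factors range freely. Hence I would first isolate and evaluate, for any $\theta\vdash N$, the sum
\[ \sum_{\lambda\vdash N} z_\lambda^{-1}\,\chi_\lambda^\theta\,x^{l(\lambda)} = \frac{f^\theta}{N!}\,H_\theta(x), \qquad (\star) \]
after which the claimed expressions follow by substitution and cancellation of the dimension factors $f^\theta$.

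To prove $(\star)$ I would argue in two steps. First, expanding $H_\theta(x)=\prod_{i=1}^{l(\theta)}(x-i+1)^{(\theta_i)}$ and writing the $i$-th rising factorial as $\prod_{j=1}^{\theta_i}(x+j-i)$, one recognizes $H_\theta(x)=\prod_{(i,j)\in\theta}(x+j-i)$, the product over the cells of the diagram of $\theta$ of $x$ plus the content $j-i$. Second, I would read the left-hand side of $(\star)$ as a principal specialization of a Schur function: since $p_\lambda(\underbrace{1,\dots,1}_{x})=x^{l(\lambda)}$ for a positive integer $x$, the expansion $s_\theta=\sum_{\lambda\vdash N} z_\lambda^{-1}\chi_\lambda^\theta\,p_\lambda$ shows that the left-hand side equals $s_\theta(1^x)$. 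The hook-content formula gives $s_\theta(1^x)=\prod_{(i,j)\in\theta}\frac{x+j-i}{h(i,j)}$, where $h(i,j)$ denotes the hook length, and the hook length formula $f^\theta=N!/\prod_{(i,j)\in\theta}h(i,j)$ then turns this into $\frac{f^\theta}{N!}H_\theta(x)$. As both sides of $(\star)$ are polynomials in $x$ of degree $N$ agreeing at every positive integer, they coincide identically.

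It then remains to substitute. In $R_C$, applying $(\star)$ with $N=n$ to each of the $m$ inner sums $\sum_{\lambda^{(i)}\vdash n} z_{\lambda^{(i)}}^{-1}\chi_{\lambda^{(i)}}^\theta x_i^{l(\lambda^{(i)})}$ contributes a factor $f^\theta H_\theta(x_i)$ per color; the product of these $m$ factors yields $(f^\theta)^m$, which combines with the prefactor $(f^\theta)^{1-m}$ to leave a single $f^\theta$, giving the stated form. In $R_H$, applying $(\star)$ with $N=mn$ to the sum over $\lambda\vdash mn$ replaces $\sum_\lambda z_\lambda^{-1}x^{l(\lambda)}\chi_\lambda^\theta$ by $\frac{f^\theta}{(mn)!}H_\theta(x)$; the factor $f^\theta$ cancels the $1/f^\theta$ present in (\ref{eq:RH-in-all-characters}) and the $(mn)!$ cancels that of the Frobenius prefactor, while the remaining $z_{m\mu}^{-1}$ is rewritten via $z_{m\mu}=m^{l(\mu)}z_\mu$ --- immediate from $m\mu=[m^{m_1}(2m)^{m_2}\cdots]$ when $\mu=[1^{m_1}2^{m_2}\cdots]$ --- into $m^{-l(\mu)}z_\mu^{-1}$, matching the target.

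The only genuinely delicate point I anticipate is the proof of $(\star)$, specifically the clean bookkeeping of normalization constants: correctly identifying the principal specialization and matching the hook-content and hook length formulas so that the factors $N!$ and $f^\theta$ cancel as required. Once $(\star)$ is established, the passage to the two stated formulas is a mechanical rearrangement, the only $m$-dependent ingredient being the elementary identity $z_{m\mu}=m^{l(\mu)}z_\mu$ in the hypermap case.
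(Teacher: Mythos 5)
Your proof is correct, and its overall architecture coincides with the paper's: both reduce the Proposition to the single identity $(\star)$, which is exactly Lemma~\ref{lem:polynomial-H} (note $n!\sum_\alpha z_\alpha^{-1}\chi_\alpha^\theta x^{l(\alpha)}=f^\theta H_\theta(x)$ is your $(\star)$ multiplied by $N!$), and then perform the same mechanical substitutions into \eqref{eq:RH-in-all-characters} and \eqref{eq:RC-in-all-characters}, including the identity $z_{m\mu}=m^{l(\mu)}z_\mu$. Where you genuinely diverge is in the proof of the key lemma. The paper argues representation-theoretically: it introduces the Jucys--Murphy elements $J_i$, observes that $T(x)=\prod_i(x+J_i)=\sum_\alpha x^{l(\alpha)}K_\alpha$, and computes the action of $T(x)$ on the irreducible module $M_\theta$ diagonally in the Gelfand--Tsetlin basis to get the eigenvalue $\prod_{w\in\theta}(x+c(w))=H_\theta(x)$. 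You instead recognize the left side of $(\star)$ as the principal specialization $s_\theta(1^x)$ via $p_\lambda(1^x)=x^{l(\lambda)}$, apply the hook-content and hook length formulas, and extend from positive integers to an indeterminate by a polynomiality argument. Both routes are sound and of comparable length; yours leans only on classical symmetric-function identities (Stanley, EC2), while the paper's avoids the hook-content formula and instead exhibits $H_\theta(x)$ directly as an eigenvalue, which is arguably more self-contained given that the paper already works inside $Z(\complexes S_n)$. One bookkeeping remark in your favour: your cancellation in the $R_H$ case uses the factor $(mn)!$ coming from the Frobenius formula applied in $S_{mn}$, which is what the computation requires; the displayed equation \eqref{eq:RH-in-all-characters} writes $n!$ there, an apparent typo that your derivation silently corrects.
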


This proposition comes from direct application of the following lemma (Lemma 3.4 in \cite{jackson1990character}) to \eqref{eq:RH-in-all-characters} and \eqref{eq:RC-in-all-characters}.

\begin{lem} \label{lem:polynomial-H}
We have the following equality:
\[ n! \sum_{\alpha \vdash n} z_{\alpha}^{-1} \chi_{\alpha}^{\theta} x^{l(\alpha)} = f^{\theta}H_{\theta}(x). \]
\end{lem}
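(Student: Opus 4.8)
The plan is to prove the identity
\[ n! \sum_{\alpha \vdash n} z_{\alpha}^{-1} \chi_{\alpha}^{\theta} x^{l(\alpha)} = f^{\theta}H_{\theta}(x) \]
by recognizing both sides as polynomials in $x$ of degree $n$ and showing they agree on enough values, or equivalently by an explicit character-theoretic evaluation. I would first recall that $n! z_\alpha^{-1}$ is the size of the conjugacy class $C_\alpha$, so the left-hand side is $\sum_{\alpha \vdash n} |C_\alpha| \chi^\theta_\alpha x^{l(\alpha)}$. The key observation is that $x^{l(\alpha)} = \prod_i x^{m_i}$ when $\alpha = [1^{m_1}2^{m_2}\cdots]$, so this sum over partitions with these multiplicities is exactly the trace, evaluated in the irreducible representation $\theta$, of a central element built from power sums. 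Concretely, I would interpret $\sum_\alpha |C_\alpha| \chi^\theta_\alpha x^{l(\alpha)}$ as $\chi^\theta$ applied to the central element $\sum_{\sigma \in S_n} x^{l(\sigma)} \sigma$ (summing over all permutations, since each contributes $\chi^\theta$ evaluated on its class and carries weight $x^{\text{number of cycles}}$), which is the class sum generating cycles.

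The cleanest route is to use the well-known fact that $\sum_{\sigma \in S_n} x^{l(\sigma)} = x(x+1)\cdots(x+n-1) = x^{(n)}$, the rising factorial, and more generally that the element $\sum_{\sigma \in S_n} x^{l(\sigma)} \sigma$ acts on the irreducible module $V^\theta$ as a scalar (being central). To compute that scalar I would employ the content evaluation: the eigenvalue of this central element on $V^\theta$ equals $\prod_{(i,j) \in \theta}(x + j - i)$, the product over cells of the Young diagram of $x$ plus the content $j-i$. Comparing this product over cells, organized row by row, with the definition $H_\theta(x) = \prod_{i=1}^{l(\theta)}(x-i+1)^{(\theta_i)}$, I would verify that row $i$ contributes the factors $(x-i+1)(x-i+2)\cdots(x-i+\theta_i)$, which is precisely the rising factorial $(x-i+1)^{(\theta_i)}$ starting at content $1-i$ and running through $\theta_i$ consecutive contents. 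Thus the product of contents over all cells factors exactly as $H_\theta(x)$, and dividing by the normalization (taking the trace rather than the scalar, i.e. multiplying by $f^\theta = \dim V^\theta$) gives $f^\theta H_\theta(x)$ on the right.

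I expect the main obstacle to be establishing the content eigenvalue formula for the central element $\sum_{\sigma} x^{l(\sigma)}\sigma$ cleanly, since this is the substantive representation-theoretic input. One rigorous way to avoid invoking it as a black box is to use the Jucys--Murphy elements $X_k = (1,k) + (2,k) + \cdots + (k-1,k)$, whose symmetric functions generate the center and whose joint eigenvalues on $V^\theta$ are the contents of the cells; the generating identity $\sum_{\sigma \in S_n} x^{l(\sigma)} \sigma = \prod_{k=1}^n (x + X_k)$ in $\complexes S_n$ then yields the eigenvalue $\prod_{k=1}^n (x + c_k)$ directly, where the $c_k$ run over all cell contents. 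Once this identity is in hand, matching the product of $(x + \text{content})$ factors to $H_\theta(x)$ is a routine diagram bookkeeping step. Finally, I would note that both sides are manifestly polynomials in $x$, so it suffices to check the identity formally in $\complexes S_n[x]$ without worrying about convergence, and the equality of the character applied to the central element on both presentations closes the argument.
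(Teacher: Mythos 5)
Your proposal is correct and follows essentially the same route as the paper: both identify the left-hand side with the character of the central element $\sum_{\sigma \in S_n} x^{l(\sigma)}\sigma = \prod_{k}(x+J_k)$ built from Jucys--Murphy elements, use the fact that the $J_k$ act on the Gelfand--Tsetlin basis of $M_\theta$ by contents to get the eigenvalue $\prod_{w\in\theta}(x+c(w))$, and match this product row by row with $H_\theta(x)$. The only cosmetic difference is that the paper phrases the comparison via the idempotent $F_\theta$ rather than directly taking the trace.
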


Since no proof is directly given in \cite{jackson1990character}, for self-containedness, we present here our proof using the representation theory of the symmetric group as in \cite{vershik2004new}. In the following proof, we identify a partition $\theta$ with its Ferrers diagram, and for a cell $w \in \theta$ in row $i$ and column $j$, we note $c(w) = j - i$ its \emph{content}.

\begin{proof}
For $1 \leq i \leq n$, we note $J_i = \sum_{j < i}(j \; i)$ the $i$-th \emph{Jucys-Murphy element}. Consider $T(x) = \prod_{i=1}^{n} (x + J_i)$, we can see that $T(x) = \sum_{\sigma \in S_n} x^{l(\sigma)} \sigma = \sum_{\alpha \vdash n} x^{l(\alpha)} K_\alpha$. By the change of basis from $(K_\theta)_{\theta \vdash n}$ to $(F_\theta)_{\theta \vdash n}$ and the fact that $(F_\theta)_{\theta \vdash n}$ are orthogonal idempotents, we have

\begin{displaymath}
T(x)F_\theta = \sum_{\alpha \vdash n} x^{l(\alpha)} h^{\alpha} \frac{\chi_{\alpha}^{\theta}}{f^{\theta}} F_{\theta}. 
\end{displaymath}

On the other hand, $F_\theta$ is the projection of $\complexes S_n$ on the irreducible module $M_\theta$ indexed by the partition $\theta$. For a vector $v_T$ in the Gelfand-Tsetlin basis of $M_\theta$, we have $T(x)v_T = \prod_{w \in \theta}(x+c(w)) v_T$, since all $J_i$ acts diagonally on the Gelfand-Tsetlin basis with $J_i v_T = c(i) v_T$ where $c(i)$ is the content of the cell occupied by $i$ in the Young tableau $T$ (\textit{c.f.} \cite{vershik2004new}). Thus $T(x)$ acts on $M_\theta$ as $\prod_{w \in \theta}(x+c(w))$. Therefore, the operators $T(x)F_\theta$ and $\prod_{w \in \theta}(x+c(w)) F_\theta$ act identically in $\complexes S_n$ and thus they are equal. We conclude the proof by comparing the two equalities with the observation that $\prod_{w \in \theta}(x+c(w)) = H_\theta(x)$.
\end{proof}

We can see that the characters in $R_H$ in Proposition \ref{prop:series-in-big-characters-simplified} are all evaluated at the partition $[m^n]$ and partitions of the form $m\mu$ with $\mu \vdash n$. In \cite{jackson1990character}, $\chi_{[m^n]}^{\theta}$ is proved to have an expression as a product of smaller characters, which is a crucial step towards the quadrangulation relation. This factorization is also presented in \cite{james1981representation} (Section 2.7) under the framework of $p$-core and abacus display of a partition. With a generalization due to Littlewood \cite{littlewood1951modular} that applies to all partitions of the form $m\mu$, we will give a similar relation between $m$-hypermaps and $m$-constellations in Section~\ref{sec:app}.

\section{Factorization of characters evaluated at $m\lambda$} \label{sec:fact}

In this section we will present the following result on factorizing $\chi_{[m\lambda]}^{\theta}$ into smaller characters. The notion of $m$-splittable partition will be defined later.

\begin{thm}[Littlewood 1951 \cite{littlewood1951modular}] \label{thm:character-factorization}
Let $m,n$ be two natural numbers, and $\lambda \vdash n$, $\theta \vdash mn$ be two partitions. We consider partitions as multisets and we denote multiset sum by $\uplus$. If $\theta$ is $m$-splittable, we have
\[ \chi_{m\lambda}^{\theta} = z_{\lambda} \sgn_\theta \sum_{\lambda^{(1)} \uplus \cdots \uplus \lambda^{(m)} = \lambda} \prod_{i=1}^m \chi_{\lambda^{(i)}}^{\theta^{(i)}} z_{\lambda^{(i)}}^{-1}, \]
with $\sgn_\theta$ and all $\theta^{(i)}$ depending only on $\theta$ and $m$.

If $\theta$ is not $m$-splittable, $\chi_{m\lambda}^{\theta} = 0$.
\end{thm}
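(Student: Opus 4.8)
The plan is to realize this factorization through the theory of symmetric functions, specifically via the \emph{plethysm} that relates power-sum evaluations at $m$-fold dilated cycle types to the original ones. The key observation is that evaluating a character $\chi^\theta$ at a class of cycle type $m\lambda$ is, by the Frobenius characteristic map, the coefficient extraction $\chi^\theta_{m\lambda} = \langle s_\theta, p_{m\lambda} \rangle$, where $p_{m\lambda} = \prod_i p_{m\lambda_i}$ is a product of power-sums with all indices multiplied by $m$. Since $p_{mk} = p_k[p_m]$ is the plethystic substitution of $p_m$ into $p_k$, the whole vector $p_{m\lambda}$ factors as $p_\lambda[p_m]$. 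The entire problem therefore reduces to expanding $s_\theta$ in the Schur basis after the plethystic substitution $p_m \mapsto$ (a single power-sum variable), i.e. understanding $s_\theta[p_m]$ or equivalently the coefficients $\langle s_\theta, p_\lambda[p_m]\rangle$ as $\lambda$ ranges over partitions of $n$.

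First I would set up the generating identity. Writing $\langle s_\theta, p_\lambda[p_m]\rangle$ and using the defining inner product $\langle p_\alpha, p_\beta\rangle = z_\alpha \delta_{\alpha\beta}$, one sees that the sought quantity is governed by the expansion of $s_\theta$ in terms of products of power-sums in the $m$-th roots of the variables. Concretely, the substitution $p_k \mapsto p_{mk}$ corresponds to replacing each variable $x_j$ by its $m$ distinct $m$-th roots $\zeta^a x_j^{1/m}$ for $\zeta$ a primitive $m$-th root of unity. This is exactly the classical \emph{$m$-quotient} construction: the Schur function $s_\theta$ evaluated on the union of $m$ sets of variables, each related by roots of unity, either vanishes or factors as a signed product $\sgn_\theta \prod_{i=1}^m s_{\theta^{(i)}}$ over the variable sets, where $(\theta^{(0)},\ldots,\theta^{(m-1)})$ is the \emph{$m$-quotient} of $\theta$ and the vanishing happens precisely when the \emph{$m$-core} of $\theta$ is nonempty. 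The condition ``$\theta$ is $m$-splittable'' should be identified with ``$\theta$ has empty $m$-core,'' and the sign $\sgn_\theta$ is the $m$-sign arising from straightening the abacus display, matching the $p$-core/abacus framework cited from James.

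Next I would extract coefficients. Having $s_\theta[p_m] = \sgn_\theta \prod_{i=1}^m s_{\theta^{(i)}}(\text{$i$-th root variables})$, I take the inner product against $p_\lambda$. Because the power-sums distribute multiplicatively over disjoint variable sets and the inner product on products factors, the pairing $\langle p_\lambda, \prod_i s_{\theta^{(i)}}\rangle$ expands as a sum over all ways of distributing the parts of $\lambda$ among the $m$ factors, namely over multiset decompositions $\lambda^{(1)}\uplus\cdots\uplus\lambda^{(m)} = \lambda$. Each term contributes $\prod_i \langle s_{\theta^{(i)}}, p_{\lambda^{(i)}}\rangle = \prod_i \chi^{\theta^{(i)}}_{\lambda^{(i)}}$, and the bookkeeping of the $z$-factors from the inner product produces exactly the weights $z_\lambda \prod_i z_{\lambda^{(i)}}^{-1}$ appearing in the statement. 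Setting $\theta^{(i)}$ to be the components of the $m$-quotient then yields the claimed formula, with the non-splittable case giving $0$ as $s_\theta[p_m]=0$.

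The main obstacle will be making the plethystic/root-of-unity factorization of $s_\theta[p_m]$ fully rigorous, including the precise determination of $\sgn_\theta$ and the correct indexing of the quotient components $\theta^{(i)}$; the sign in particular requires careful tracking of the reordering of beads on the abacus (equivalently, the Littlewood decomposition of $\theta$ into core and quotient). I would prove the factorization combinatorially via the bijection between partitions with empty $m$-core and their $m$-quotients, using the bead-abacus model to read off both $\sgn_\theta$ and the $\theta^{(i)}$, which aligns with the paper's stated goal of giving a combinatorial proof of Littlewood's result. The routine part is the coefficient extraction and the $z$-factor bookkeeping once the factorization is in hand.
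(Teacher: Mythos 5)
Your proposal is correct in outline, but it takes a genuinely different route from the paper. You work on the symmetric-function side: $\chi^\theta_{m\lambda}=\langle s_\theta,p_{m\lambda}\rangle$, $p_{m\lambda}=p_\lambda[p_m]$, then push the plethysm onto $s_\theta$ via the adjoint map (your notation ``$s_\theta[p_m]$'' is a slip --- what factors is the image of $s_\theta$ under the adjoint of $f\mapsto f[p_m]$, i.e.\ the algebra map sending $p_{mk}\mapsto mp_k$ and $p_j\mapsto 0$ for $m\nmid j$, realized by the root-of-unity substitution), invoke the core/quotient factorization of that image, and finish by pairing with $p_\lambda$ through the coproduct, which correctly produces the multinomial weight $z_\lambda\prod_i z_{\lambda^{(i)}}^{-1}$ from the ordered-versus-multiset count of decompositions $\lambda^{(1)}\uplus\cdots\uplus\lambda^{(m)}=\lambda$. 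This is essentially Littlewood's original algebraic argument, which the paper explicitly sets aside in favor of a self-contained combinatorial proof: the paper expands $\chi^\theta_{m\lambda}$ directly by the Murnaghan--Nakayama rule, encodes border-strip removal as particle jumps $\sigma_{mk,p}$ on the infinite wedge space, observes that a jump of length $mk$ acts within a single residue class and hence as a jump of length $k$ on one component of the $m$-split, and thereby builds an explicit bijection between ribbon tableaux of shape $\theta$ and content $m\lambda$ and $m$-tuples of ribbon tableaux of shapes $\theta^{(i)}$ with total content $\lambda$; the sign is handled by splitting jump-overs into intra-class ones (absorbed into the $\sgn(T_i)$) and inter-class ones (whose parity is shown to depend only on $\theta$, defining $\sgn_\theta$). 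Note that the single step you flag as the ``main obstacle'' --- the vanishing/factorization of the specialized Schur function with a precisely determined sign --- is exactly where all the content lies, and it is the part the paper's jump-over analysis is designed to prove; your plan defers it to the abacus model, which is legitimate but is itself essentially equivalent in difficulty to the paper's entire argument. Your route buys a cleaner conceptual frame and immediate access to the symmetric-function literature; the paper's buys self-containedness and an explicit bijection on ribbon tableaux, which fits its broader aim of hinting at a combinatorial interpretation of the enumerative relation.
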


An algebraic proof is given in \cite{littlewood1951modular}. For the sake of self-containedness, we will present a combinatorial proof here. We will first give a natural combinatorial interpretation of $m$-splittable partitions using the infinite wedge space. A brief introduction to the infinite wedge space can be found in the appendix of \cite{okounkov2001infinite}, after which some of our notations here follow. With this combinatorial interpretation, we will give a straightforward, purely combinatorial proof of Theorem \ref{thm:character-factorization}.

\subsection{Infinite wedge space and boson-fermion correspondence}
We recall some definitions about the infinite wedge space taken from \cite{okounkov2001infinite}. Let $(\underline{k})_{k \in \integers}$ be a set of variables indexed by integers, and $\wedge$ be an associative and anti-commutative binary relation acting as exterior product. For $S \subset \integers$, we note $S_+ = S \cap \naturals$ and $S_- = \integers_{< 0} \setminus S$. We define $\Lambda^{\infty/2}$ as the vector space spanned by vectors of the form $v_{S} = \underline{s_1} \wedge \underline{s_2} \wedge \ldots$ with $S = \{ s_1 > s_2 > \ldots \}$ such that both $S_+$ and $S_-$ are finite. The vector space $\Lambda^{\infty/2}$ is called the \emph{infinite wedge space}.

We define the creation operator $\phi_k$ corresponding to the variable $\underline{k}$ with $\phi_k(v) = \underline{k} \wedge v$ for all $v \in \Lambda^{\infty/2}$. We also define the annihilation operator $\phi^{*}_k$ as the adjoint operator of $\phi_k$ with respect to the canonical scalar product, satisfying the anti-commutation relation $ \phi^{*}_k \phi_k + \phi_k \phi^{*}_k = 1 $. Let $\Lambda_0$ be the subspace spanned by vectors of the form $v_{S}$ satisfying that $S_+$ and $S_-$ are finite and $|S_+|=|S_-|$. This condition on $S$ is called the \emph{charge condition} hereinafter.

\newcommand{\msq}[1]{rectangle +(0.5,0.5) +(0.25,0.25) node{#1}}
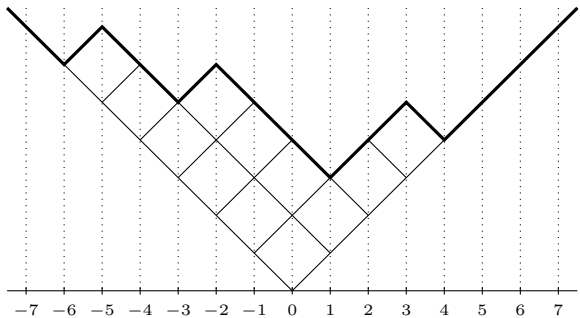
\begin{figure}[!htbp]
\begin{center}
\begin{tikzpicture}[scale = 0.5]
\draw (-7.5, 7.5) -- (0,0) -- (7.5, 7.5);
\draw (-7.5, 0) -- (7.5, 0);
\foreach \x in {-7, -6, ..., 7} \draw[style=dotted] (\x, 7.5) -- (\x, 0);
\foreach \x in {-7, -6, ..., 7} \draw (\x, 0.1) -- (\x, -0.1) node[anchor=north] {\tiny $\x$};
\draw[style = very thick] (-7.5, 7.5) -- (-6, 6) -- (-5, 7) -- (-3, 5) -- (-2, 6) -- (1, 3) -- (3, 5) -- (4, 4) -- (7.5, 7.5);
\draw (-5, 5) -- (-4, 6);
\draw (-4, 4) -- (-3, 5);
\draw (-3, 3) -- (-1, 5);
\draw (-2, 2) -- (0, 4);
\draw (-1, 1) -- (1, 3);
\draw (1, 1) -- (-3, 5);
\draw (2, 2) -- (1, 3);
\draw (3, 3) -- (2, 4);
\end{tikzpicture}
\end{center}
\caption{Rotated diagram of the partition $\lambda=(4,2,2,2,1,1)$ corresponding to the vector $v_S$ with $S=\{ 3, 0, -1, -2, -4, -5, -7, -8 \ldots \}$} \label{fig:russian-notation}
\end{figure}

Partitions are in bijection with sets satisfying the charge condition. Given a partition $\lambda = (\lambda_1, \lambda_2, \ldots, \lambda_l)$, we draw its diagram in French convention, then rotate the diagram by 45 degrees. The diagram is framed by a lattice path (thick line in Figure \ref{fig:russian-notation}), called \emph{the framing path}, consisting of two types of steps, one parallel to the line $y=x$ and the other to $y=-x$. This framing path eventually coincides with the line $y = -x$ to the left and $y=x$ to the right. We can also consider $\lambda$ as an infinite sequence by taking $\lambda_k = 0$ for $k>l$. We define the set $S_\lambda = \{ \lambda_i - i \mid i \in \naturals \} $. We can see from Figure \ref{fig:russian-notation} that $S_\lambda$ is exactly the set of starting abscissas of down-going steps (parallel to $y=-x$).

This map from $\lambda$ to $S_\lambda$ is a classical bijection between partitions and sets satisfying the charge condition. We illustrate the set $S_\lambda$ on figure as a diagram of $\mathbb{Z}$ where each position indexed by an element of $S_\lambda$ is occupied by a particle. This bijection is called the \emph{boson-fermion correspondence} in the literature. More information about representing partitions by its framing path can be found in \cite{okounkov2001infinite}.

For a partition $\lambda$, we note $v_\lambda$ the vector $v_{S_\lambda}$ corresponding to $S_\lambda$. We now define a new operator $\sigma_{n,k} = \phi_{k} \phi^{*}_{n+k}$ for $n$ a positive integer and $k$ an integer. The effect of $\sigma_{n,k}$ on $v_\lambda$ is exactly removing a border strip of length $n$ from the appropriate position of $\lambda$ when possible, and we have $\sigma_{n,k} v_\lambda = (-1)^{ht(\lambda / \mu)} v_\mu$ with $\mu$ the partition after removal of the border strip and $ht(\lambda / \mu)$ the number of rows the border strip spans minus one. See Figure \ref{fig:border-strip-removal} for an example.

Combinatorially, the operator $\sigma_{k,p}$ can be viewed as a jump of a particle from position $n+k$ back to position $k$, and the sign it induces corresponds to the number of particles underneath this jump, that is, the number of jump-overs of this jump. For example, in Figure \ref{fig:border-strip-removal}, three jump-overs occur. We can also see that $ht(\lambda / \mu)$ is exactly the number of jump-overs in the removal of the border strip $\lambda / \mu$.

\begin{figure}[!htbp]
\begin{center}
\begin{tikzpicture}[scale = 0.5]
\draw (-7.5, 7.5) -- (0,0) -- (7.5, 7.5);
\draw (-7.5, 0) -- (7.5, 0);
\foreach \x in {-7, -6, ..., 7} \draw[style=dotted] (\x, 7.5) -- (\x, 0);
\foreach \x in {-7, -6, ..., 7} \draw (\x, 0.1) -- (\x, -0.1) node[anchor=north] {\tiny $\x$};
\draw[style = dashed] (-7.5, 7.5) -- (-6, 6) -- (-5, 7) -- (-3, 5) -- (-2, 6) -- (1, 3) -- (3, 5) -- (4, 4) -- (7.5, 7.5);
\draw[style = very thick] (-7.5, 7.5) -- (-3, 3) -- (-2, 4) -- (-1, 3) -- (0, 4) -- (1, 3) -- (3, 5) -- (4, 4) -- (7.5, 7.5);
\draw[dashed] (-5, 5) -- (-4, 6);
\draw[dashed] (-4, 4) -- (-3, 5);
\draw[dashed] (-3, 3) -- (-1, 5);
\draw (-2, 2) -- (0, 4);
\draw (-1, 1) -- (1, 3);
\draw (1, 1) -- (-2, 4);
\draw[dashed] (-2, 4) -- (-3, 5);
\draw (2, 2) -- (1, 3);
\draw (3, 3) -- (2, 4);

\draw (-7.5, -2.5) -- (7.5, -2.5);
\foreach \x in {3, 0, -6, -2, -4, -5, -7} \filldraw[black] (\x, -2.5) circle (0.2);
\draw[dashed] (-1, -2.5) circle (0.25);
\draw[-latex] (-1, -2.1) -- (-1, -1) -- (-6, -1) -- (-6, -2.1);
\draw[dotted] (0, -2) -- (0, -3);
\end{tikzpicture}
\end{center}
\caption{Effect of $\sigma_{-6,5}$ on $v_\lambda$ with $\lambda = (4,2,2,2,1,1)$. We have $\sigma_{-6,5} v_\lambda = - v_\mu$ with $\mu = (4,2,1)$} \label{fig:border-strip-removal}
\end{figure}
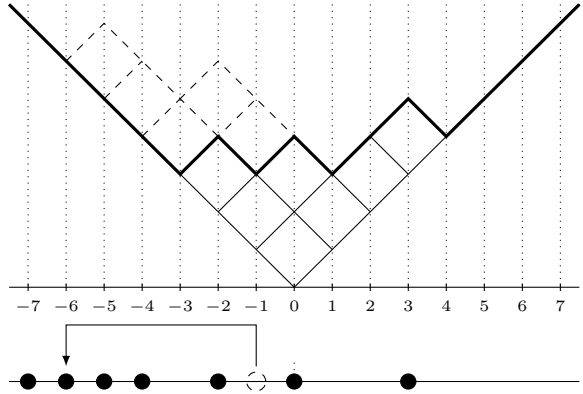

We now define a new operator $\alpha_n = \sum_{k \in \integers} \sigma_{n,k}$ for $n \neq 0$, which essentially tries to remove a border strip of length $n$ from $\lambda$ in all possible ways when applied to $v_\lambda$. It follows from the Murnaghan-Nakayama rule (\textit{c.f.} \cite{stanley2001enumerative}) that $\chi^{\theta}_\lambda$ is the coefficient of $v_{(0)}$, which corresponds to the empty partition, in $\alpha_{\lambda_1} \cdots \alpha_{\lambda_{l(\lambda)}} v_\theta$.

\subsection{$m$-splittable partitions}
We now define $m$-splittable partitions. Let $S$ be a set satisfying the charge condition. We define its \emph{$m$-split} as an $m$-tuple of sets $(S_0, S_1, \ldots, S_{m-1})$ such that $S_i = \{ a \, | \, ma+i \in S \}$ for $i$ from $0$ to $m-1$. A set $S$ satisfying the charge condition is called \emph{$m$-splittable} if every set in its $m$-split satisfies the charge condition. A partition $\lambda$ is called  \emph{$m$-splittable} if $S_\lambda$ is $m$-splittable. In this case, we define the \emph{$m$-split} $(\lambda^{(0)}, \ldots, \lambda^{(m-1)})$ of $\lambda$ to be the tuple of partitions corresponding component-wise to the $m$-split of $S_\lambda$.

Here is an example in Figure \ref{fig:m-split} of the $m$-split of an $m$-splittable partition. We take $m=3$ and we consider the partition $\theta = (6,6,4,4,4,3,3)$. We can verify easily that $\theta$ is 3-splittable. To obtain the 3-split of $\theta$, we split the set $S_\theta$ according to residue classes modulo 3, then rescale to obtain 3 smaller sets, and finally we reconstruct partitions corresponding to the smaller sets.

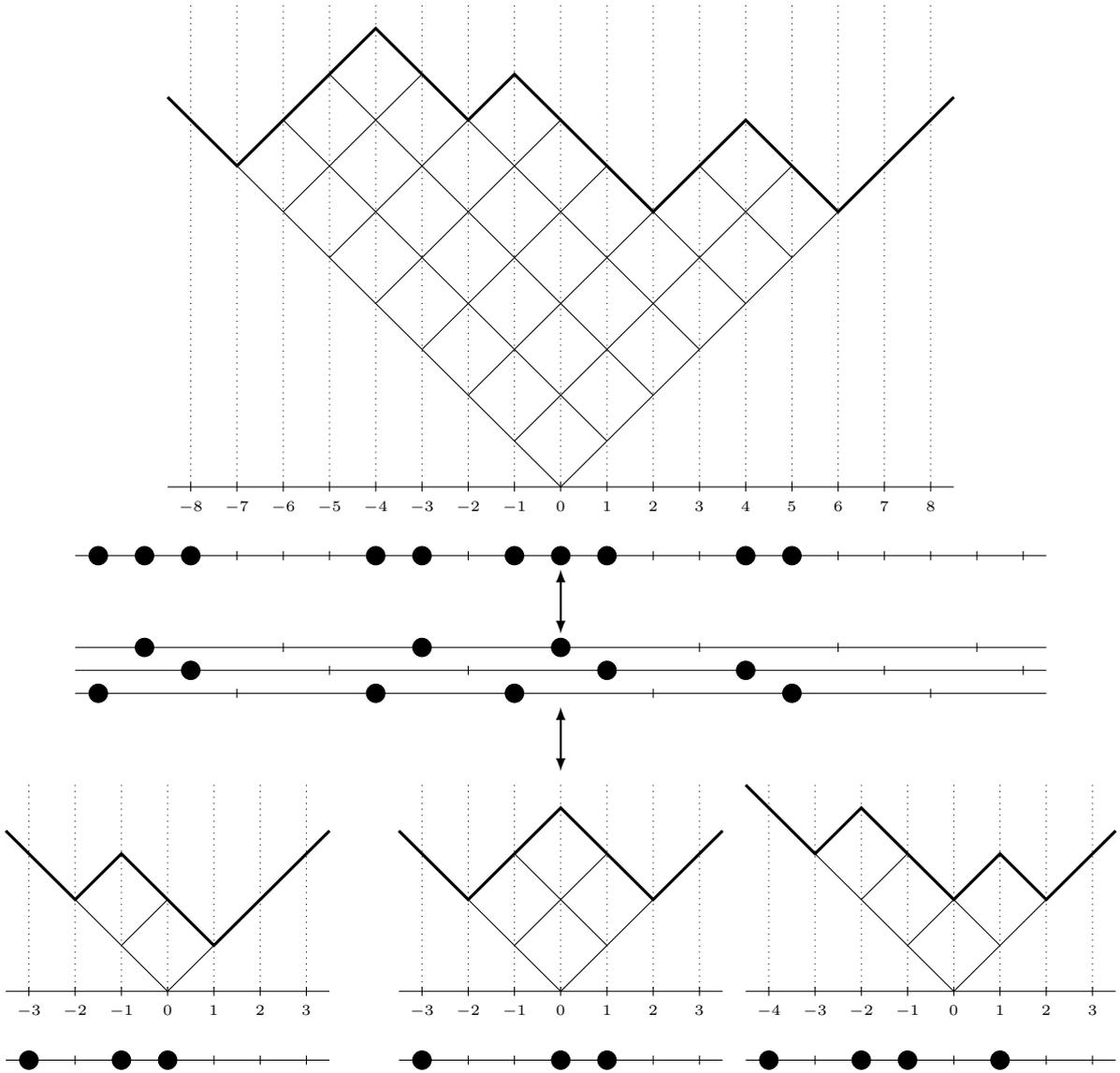
\begin{figure}[!htbp]
\begin{center}
\begin{tikzpicture}[scale = 0.65]
\draw (-8.5, 8.5) -- (0,0) -- (8.5, 8.5);
\draw (-8.5, 0) -- (8.5, 0);
\foreach \x in {-8, -7, ..., 8} \draw[style=dotted] (\x, 10.5) -- (\x, 0);
\foreach \x in {-8, -7, ..., 8} \draw (\x, 0.1) -- (\x, -0.1) node[anchor=north] {\tiny $\x$};
\draw[very thick] (-8.5, 8.5) -- (-7, 7) -- (-4, 10) -- (-2, 8) -- (-1, 9) -- (2, 6) -- (4, 8) -- (6, 6) -- (8.5, 8.5);
\draw (-6, 6) -- (-3, 9);
\draw (-5, 5) -- (-2, 8);
\draw (-4, 4) -- (0, 8);
\draw (-3, 3) -- (1, 7);
\draw (-2, 2) -- (2, 6);
\draw (-1, 1) -- (5, 7);
\draw (1, 1) -- (-6, 8);
\draw (2, 2) -- (-5, 9);
\draw (3, 3) -- (-2, 8);
\draw (4, 4) -- (2, 6);
\draw (5, 5) -- (3, 7);

\draw (-10.5, -1.5) -- (10.5, -1.5);
\foreach \x in {-10, -9, ..., 10} \draw (\x, -1.6) -- (\x, -1.4);
\foreach \x in {5, 4, 1, 0, -1, -3, -4, -8, -9, -10} \filldraw[black] (\x, -1.5) circle (0.2);

\draw[latex-latex, thick] (0, -1.8) -- (0, -3.2);

\draw (-10.5, -3.5) -- (10.5, -3.5);
\foreach \x in {-9, -6, ..., 9} \draw (\x, -3.4) -- (\x, -3.6);
\foreach \x in {0, -3, -9} \filldraw[black] (\x, -3.5) circle (0.2);

\draw (-10.5, -4) -- (10.5, -4);
\foreach \x in {-8, -5, ..., 10} \draw (\x, -3.9) -- (\x, -4.1);
\foreach \x in {4, 1, -8} \filldraw[black] (\x, -4) circle (0.2);

\draw (-10.5, -4.5) -- (10.5, -4.5);
\foreach \x in {-10, -7, ..., 8} \draw (\x, -4.4) -- (\x, -4.6);
\foreach \x in {5, -1, -4, -10} \filldraw[black] (\x, -4.5) circle (0.2);

\draw[latex-latex, thick] (0, -4.8) -- (0, -6.2);

\begin{scope}[xshift=-8.5cm, yshift=-11cm]
\draw (-3.5, 3.5) -- (0,0) -- (3.5, 3.5);
\draw (-3.5, 0) -- (3.5, 0);
\foreach \x in {-3, -2, ..., 3} \draw[style=dotted] (\x, 4.5) -- (\x, 0);
\foreach \x in {-3, -2, ..., 3} \draw (\x, 0.1) -- (\x, -0.1) node[anchor=north] {\tiny $\x$};
\draw[very thick] (-3.5, 3.5) -- (-2,2) -- (-1,3) -- (1,1) -- (3.5,3.5);
\draw (-1,1) -- (0, 2);
\draw (-3.5, -1.5) -- (3.5, -1.5);
\foreach \x in {-3, -2, ..., 3} \draw (\x, -1.6) -- (\x, -1.4);
\foreach \x in {-3, -1, 0} \filldraw[black] (\x, -1.5) circle (0.2);
\end{scope}

\begin{scope}[yshift=-11cm]
\draw (-3.5, 3.5) -- (0,0) -- (3.5, 3.5);
\draw (-3.5, 0) -- (3.5, 0);
\foreach \x in {-3, -2, ..., 3} \draw[style=dotted] (\x, 4.5) -- (\x, 0);
\foreach \x in {-3, -2, ..., 3} \draw (\x, 0.1) -- (\x, -0.1) node[anchor=north] {\tiny $\x$};
\draw[very thick] (-3.5, 3.5) -- (-2,2) -- (0,4) -- (2,2) -- (3.5,3.5);
\draw (-1,1) -- (1, 3);
\draw (1,1) -- (-1, 3);
\draw (-3.5, -1.5) -- (3.5, -1.5);
\foreach \x in {-3, -2, ..., 3} \draw (\x, -1.6) -- (\x, -1.4);
\foreach \x in {-3, 0, 1} \filldraw[black] (\x, -1.5) circle (0.2);
\end{scope}

\begin{scope}[xshift=8.5cm, yshift=-11cm]
\draw (-4.5, 4.5) -- (0,0) -- (3.5, 3.5);
\draw (-4.5, 0) -- (3.5, 0);
\foreach \x in {-4, -3, ..., 3} \draw[style=dotted] (\x, 4.5) -- (\x, 0);
\foreach \x in {-4, -3, ..., 3} \draw (\x, 0.1) -- (\x, -0.1) node[anchor=north] {\tiny $\x$};
\draw[very thick] (-4.5, 4.5) -- (-3,3) -- (-2,4) -- (0,2) -- (1,3) -- (2,2) -- (3.5,3.5);
\draw (-2, 2) -- (-1,3);
\draw (-1,1) -- (0, 2);
\draw (1,1) -- (0, 2);
\draw (-4.5, -1.5) -- (3.5, -1.5);
\foreach \x in {-4, -3, ..., 3} \draw (\x, -1.6) -- (\x, -1.4);
\foreach \x in {-4, -2, -1, 1} \filldraw[black] (\x, -1.5) circle (0.2);
\end{scope}
\end{tikzpicture}
\end{center}
\caption{Example of a 3-splittable partition, alongside with its 3-split} \label{fig:m-split}
\end{figure}

As a remark, comparing our terminology with the one in \cite{littlewood1951modular}, it is easy to see that an $m$-splittable partition is exactly a partition with an empty $m$-core, and in this case, its $m$-split coincides with its $m$-quotient. Moreover, we can easily show that the notion of ``$m$-balanced partitions'' used in \cite{jackson1990character} and \cite{jackson1999combinatorial} is exactly the notion of $m$-splittable partitions. An advantage of our point of view is that it is much more intuitive and avoids technical lemmas when dealing with these objects as in \cite{jackson1990character}.

\subsection{Combinatorial proof of Theorem \ref{thm:character-factorization}}
We are now ready to give a combinatorial proof to Theorem \ref{thm:character-factorization}, alongside with explicit expression of $\sgn_\theta$ and all $\theta^{(i)}$. Essentially, using the Murnaghan-Nakayama rule, we establish a bijection between ribbon tableaux of shape $\theta$ and content $m\lambda$ and sequences of $m$ ribbon tableaux $T_0, \ldots, T_{m-1}$ of shape $\theta^{(0)}, \ldots, \theta^{(m-1)}$ respectively and total content $\lambda$. Readers can refer to \cite{stanley2001enumerative} for more on ribbon tableaux and the Murnaghan-Nakayama rule.

The basic idea is that the removal of a border strip $s$ of length multiple of $m$ from the partition $\lambda$ only affects elements in $S_\lambda$ in one congruence class modulo $m$. Since one congruence class corresponds to one component in the $m$-split, the removal of $s$ can be reduced to the removal of a smaller strip $s'$ in the corresponding component in the $m$-split. We then deal with the sign issue.

\begin{proof}[Combinatorial proof of Theorem \ref{thm:character-factorization}]

We try to evaluate $\chi^\theta_{m\lambda}$ with the Murnaghan-Nakayama rule.

For any integer $p, k$ with $k > 0$, the operator $\sigma_{mk, p}$ only affects particles occupying the $n$-th position with $n \equiv p \mod m$. For any $S$, $\sigma_{mk, mp+i}$ only changes the component $S_{i}$ in the $m$-split, and its effect is equivalent to $\sigma_{k, p}$ for $S_i$. Thus the operator $\alpha_{mk}$ preserves the $m$-splittable property of a partition. We then have $\chi^\theta_{m\lambda} = 0$ for $\theta$ not $m$-splittable, since the empty partition is $m$-splittable.

Now we suppose that $\theta$ is $m$-splittable. Let $( \theta^{(0)}, \ldots, \theta^{(m-1)} )$ be its $m$-split.

A ribbon tableau $T_\theta$ of shape $\theta$ and of content $[1^{m\lambda_1}2^{m\lambda_2}\ldots]$ can be considered as the application of a sequence of operators of the form $\sigma_{mk, p}$ on $v_\theta$, therefore $m$ series of operators of the form $\sigma_{k,p}$ on all of the $\theta^{(i)}$. The operator $\sigma_{mk, mp+i}$ on $v_\theta$ acts only on $\theta^{(i)}$, and acts as the operator $\sigma_{k,p}$. This induces a bijection that sends a ribbon tableau of shape $\theta$ and content $m\lambda$ to $m$ ribbon tableaux $T_0, \ldots, T_{m-1}$ of shapes $\theta^{(0)}, \ldots, \theta^{(m-1)}$ and total content $\lambda$. We note $\lambda^{(0)}, \ldots, \lambda^{(m-1)}$ the content of $T_0, \ldots, T_{m-1}$ respectively. 

We now consider the sign. For a ribbon tableau $T$, we note $\sgn(T)$ the sign of a ribbon tableau $T$, which is the product of $(-1)^{ht(s)}$ for all strips $s$. We also note $j(T)$ the number of jump-overs in the corresponding operator sequence, and we have $\sgn(T) = (-1)^{j(T)}$. In $T_\theta$, there are two types of jump-overs: jump-overs between particles in the same congruence class, and jump-overs between particles in different congruence classes. The number of jump-overs of the first type is noted as $j_{\mathrm{endo}}(T)$, and that of the second type $j_{\mathrm{inter}}(T)$, and we have $j(T) = j_{\mathrm{endo}}(T) + j_{\mathrm{inter}}(T)$. By definition $j_{\mathrm{endo}}(T) = \sum_{i=0}^{m-1} j(T_i)$. For $j_{\mathrm{inter}}(T)$, we can check that the parity of $j_{\mathrm{inter}}(T)$ is preserved when commuting any two operators $\sigma_{mk,p_1}$ and $\sigma_{mk,p_2}$, and when replacing any operator $\sigma_{m(k+l), p}$ by $\sigma_{ml, p-mk} \sigma_{mk,p}$. Therefore, the parity of $j_{\mathrm{inter}}(T)$ depends only on $\theta$. We thus define $\sgn_\theta$ as $\sgn_\theta = (-1)^{j_{\mathrm{inter}}(T)}$ and we have $\sgn(T) =\sgn_\theta \prod_{i=0}^{m-1} \sgn(T_i) $.

To evaluate $\chi^\theta_{m\lambda}$ with the Murnaghan-Nakayama rule, we consider the sum over the sign of all ribbon tableau $T_\theta$ of shape $\theta$ and of content $m\lambda$. We note $t_k$ the multiplicity of $k$ as parts in $\lambda$, and $t_{k,i}$ the multiplicity of $k$ in $\lambda^{(i)}$. We have $t_k = \sum_{j=0}^{m-1}t_{k,j}$. By the bijection and the sign relation between $T$ and $T_0, \ldots, T_{m-1}$ mentioned above, we have the following formula:

\begin{displaymath}
\sum_{T_\theta} \sgn(T_\theta) = \sum_{\lambda^{(1)} \uplus \cdots \uplus \lambda^{(m)} = \lambda} \sum_{T_0, \ldots, T_{m-1}} \sgn_\theta \left( \prod_{i=0}^{m-1} \sgn(T_i) \right) \prod_{k > 0}  \binom{t_k}{t_{k,0}, \ldots, t_{k,m-1}}.
\end{displaymath}

This is essentially a double-counting formula. On the left hand side, we have the sum of the sign of all ribbon tableaux $T_\theta$ of shape $\theta$ and of content $m\lambda$. On the right hand side, we first sum over all possible ways of regrouping parts of $\lambda$ into smaller partitions $\lambda^{(0)}, \ldots, \lambda^{(m-1)}$, then we sum over all tuples of ribbon tableaux $T_0, T_1, \ldots, T_{m-1}$ with $T_i$ of content $\lambda^{(i)}$, and in the final summand, we have the product of $\sgn_\theta$ and the signs of all $T_i$. We have the multinomial factor in the final summand since there are multiple ways to distribute operators $\alpha_{n}$ with the same $n$ to achieve the same sequence of $\lambda^{(0)}, \ldots, \lambda^{(m-1)}$. Since we have a bijection sending $T_\theta$ to $T_0, \ldots, T_{m-1}$, and a relation $\sgn(T) =\sgn_\theta \prod_{i=0}^{m-1} \sgn(T_i) $ on their signs, we have the equality.

We conclude the proof by the Murnaghan-Nakayama rule while noticing that $z_\lambda=\prod_{k>0}k^{t_k}t_{k}!$, and the fact that
\begin{displaymath}
z_\lambda \prod_{i=0}^{m-1} z^{-1}_{\lambda^{(i)}} = \prod_{k>0} k^{t_k - \sum_{j=0}^{m-1}t_{k,j}} \frac{t_{k}!}{t_{k,0}! t_{k,1}! \cdots t_{k,m-1}!} = \prod_{k>0} \binom{t_k}{t_{k,0}, \ldots, t_{k,m-1}}.
\end{displaymath}
\end{proof}

As an application of this combinatorial point of view, we also have a factorization result on the polynomial $H_\theta$ for $m$-splittable partitions $\theta$.

\begin{lem} \label{lem:from-Hm-back-to-H}
For an $m$-splittable partition $\theta \vdash n$, we have
\[ H_\theta(x) = m^{mn} \prod_{i=1}^{m} \prod_{j=0}^{m-1} H_{\theta^{(i)}} \left( \frac{x-i+j+1}{m} \right). \]
\end{lem}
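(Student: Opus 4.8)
The plan is to start from the identity $H_\theta(x) = \prod_{w \in \theta}(x + c(w))$ established inside the proof of Lemma~\ref{lem:polynomial-H}, and to reduce the claimed factorization to a statement about the multiset of contents of $\theta$. First I would expand the right-hand side: since $H_{\theta^{(i)}}(y) = \prod_{w' \in \theta^{(i)}}(y + c(w'))$, substituting $y = (x - i + j + 1)/m$ and pulling out the leading coefficient of each factor gives
\[ m^{mn} \prod_{i=1}^{m} \prod_{j=0}^{m-1} H_{\theta^{(i)}}\!\left( \frac{x - i + j + 1}{m} \right) = \prod_{i=1}^{m} \prod_{w' \in \theta^{(i)}} \prod_{j=0}^{m-1} \bigl( x + m\,c(w') - i + 1 + j \bigr), \]
where the factor $m^{mn}$ is exactly cancelled by the $m^{-|\theta^{(i)}|}$ produced for each of the $m$ choices of $j$, using $\sum_i |\theta^{(i)}| = n$ (an $m$-splittable partition has empty $m$-core, and its $m$-split is the $m$-quotient, so $|\theta| = m\sum_i|\theta^{(i)}|$). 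Each cell $w'$ of $\theta^{(i)}$ thus contributes the $m$ consecutive integers $m\,c(w') - i + 1, \dots, m\,c(w') - i + m$ as negated roots, so the lemma is equivalent to the multiset identity
\[ \{\, c(w) : w \in \theta \,\} = \biguplus_{i=1}^{m} \ \biguplus_{w' \in \theta^{(i)}} \ \{\, m\,c(w') - i + 1,\ \dots,\ m\,c(w') - i + m \,\}. \]

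The key tool I would use is a beta-set formula for the number of cells on a fixed diagonal. Writing $S = S_\theta$, a short computation from $c(w)=j-i$ and $\theta_i - i = s_i \in S$ shows that the number of cells of $\theta$ of content $c$ equals $g_S(c) := \#\{ s \in S : s \ge c \} - \#\{ s \in \integers_{<0} : s \ge c \}$, the excess of particles weakly right of position $c$ over the vacuum value; the charge condition $|S_+|=|S_-|$ guarantees $g_S(c) \ge 0$. This is precisely the quantity that behaves well under the $m$-split: splitting each of the two counts according to the residue of $s$ modulo $m$, writing $s = ma + r$ with $r \in \{0,\dots,m-1\}$, and using that $s<0$ if and only if $a<0$, one obtains the clean decomposition
\[ g_S(c) = \sum_{r=0}^{m-1} g_{S_r}\!\left( \left\lceil \frac{c-r}{m} \right\rceil \right), \]
where $S_r$ is the beta-set of the component $\theta^{(r)}$. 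On the other side, reading off the multiplicity of content $c$ from the expanded right-hand side above, a cell $w' \in \theta^{(i)}$ contributes to content $c$ exactly when $c(w') = \lfloor (c+i-1)/m \rfloor$, so that side records $\sum_{i=1}^{m} g_{T_i}\bigl(\lfloor (c+i-1)/m \rfloor\bigr)$, with $T_i$ the beta-set of $\theta^{(i)}$.

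Matching the two expressions is then elementary bookkeeping: the identity $\lceil (c-r)/m \rceil = \lfloor (c + m - 1 - r)/m \rfloor$ shows that the $r$-th term on one side equals the $i$-th term on the other under the index reversal $r = m - i$, so the multiplicities of every content $c$ agree and the two polynomials are equal. I expect the main obstacle to be exactly this bookkeeping: establishing the content-multiplicity formula together with its residue-wise splitting (in particular that the negative part of each $S_r$ matches the vacuum, which is where the $m$-splittability/charge hypothesis is actually used), and then tracking the shift $-i+1+j$ together with the reversal $i \mapsto m-i$ that aligns the labelling of the $\theta^{(i)}$ with the residue classes.

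As a cross-check, and an alternative route that produces the constant and the shifts transparently, one can instead write $H_\theta(x) = \prod_{s \in S_+} \Gamma(x+1+s) \big/ \prod_{s \in S_-} \Gamma(x+1+s)$ (a telescoping consequence of $H_\theta(x)=\prod_i (x-i+1)^{(\theta_i)}$, regularized against the empty partition) and apply the Gauss multiplication formula to each factor after grouping the elements of $S$ by residue modulo $m$. There the constant $m^{mn}$ emerges directly from the $m^{mz-1/2}$ factors, the arguments $\tfrac{x-i+j+1}{m}$ appear automatically, and the spurious $(2\pi)$ and half-integer exponents cancel precisely because each component $\theta^{(r)}$ satisfies the charge condition.
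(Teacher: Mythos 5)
Your proof is correct, and it rests on the same reduction as the paper's --- writing $H_\theta(x)=\prod_{w\in\theta}(x+c(w))$ and reducing the lemma to the multiset identity on contents --- but it establishes that identity by a genuinely different route. The paper reuses the ribbon-tableau correspondence set up in the proof of Theorem~\ref{thm:character-factorization}: a ribbon tableau of shape $\theta$ and content $[m^n]$ corresponds to a tuple of standard Young tableaux of shapes $\theta^{(1)},\ldots,\theta^{(m)}$, and each cell $w$ of $\theta^{(i)}$ corresponds to an $m$-strip of $\theta$ whose cells have contents $mc(w)-i+1,\ldots,mc(w)-i+m$; this last fact is asserted by inspection of the border-strip removal. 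You instead count cells diagonal by diagonal via the beta-set, using that the number of cells of content $c$ is $\#\{s\in S_\theta: s\ge c\}-\#\{s<0: s\ge c\}$, and split that count by residue classes modulo $m$; I checked the decomposition $g_S(c)=\sum_r g_{S_r}(\lceil(c-r)/m\rceil)$, the window condition $c(w')=\lfloor(c+i-1)/m\rfloor$, and the matching under $r=m-i$, and they are all sound (and consistent with the example $\theta=(6,6,4,4,4,3,3)$, $m=3$ of Figure~\ref{fig:m-split}). What each approach buys: the paper's argument is shorter given the machinery already in place and keeps the strip structure visible, which is what the proof of Proposition~\ref{prop:link-of-hypermaps-and-constellation-series} ultimately exploits; yours is self-contained, makes explicit exactly where $m$-splittability is used (each $S_r$ must satisfy the charge condition for the residue-wise count to be the content count of a genuine partition $\theta^{(r)}$), and pins down the labelling $\theta^{(i)}\leftrightarrow S_{m-i}$ between components and residue classes, which the paper (having defined the $m$-split with indices $0,\ldots,m-1$ but using $1,\ldots,m$ in the statement) leaves implicit. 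Your Gamma-function cross-check via the Gauss multiplication formula is a nice independent confirmation of the constant $m^{mn}$ and the shifts, though as written it is only a sketch.
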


\begin{proof}
Let $\theta \vdash mn$ be an $m$-splittable partition and $(\theta^{(0)}, \ldots, \theta^{(m-1)})$ be its $m$-split. Consider an arbitrary ribbon tableau $T$ of form $\theta$ and content $[m^n]$. We note $T_1, \ldots, T_{m}$ the corresponding ribbon tableaux of form $\theta^{(1)}, \ldots ,\theta^{(m)}$ respectively. In fact, $T_0, \ldots, T_{m-1}$ are all standard Young tableaux, and each cell $w$ of $\theta^{(i)}$ corresponds to a strip $s$ of length $m$ in $T$. Moreover, we can see that the contents of cells in $s$ are exactly $mc(w)-i+1, mc(w)-i+2, \ldots, mc(w)-i+m$. These facts are independent of the choice of $T$. Therefore we have:
\begin{align*}
H_\theta(x) &= \prod_{w \in \theta} (x+c(w)) = \prod_{i=1}^{m} \prod_{w \in \theta^{(i)}} \prod_{j=0}^{m-1} (x+mc(w)-i+j+1) \\
&= m^{mn} \prod_{i=1}^{m} \prod_{j=0}^{m-1} \prod_{w \in \theta^{(i)}} \left( \frac{x-i+j+1}{m}+c(w) \right) \\
&= m^{mn} \prod_{i=}^{m} \prod_{j=0}^{m-1} H_{\theta^{(i)}} \left( \frac{x-i+j+1}{m} \right).
\end{align*}
\end{proof}

As a final remark, the operation in Figure \ref{fig:m-split} is sometimes called drawing the \emph{abacus display} in some literature in algebra. Readers can refer to, for example, Section 2.7 of \cite{james1981representation} for more details on $p$-core, $p$-quotient and abacus display, even though there is only a specialized version of Theorem \ref{thm:character-factorization} in this reference.

\section{Generalization of the quadrangulation relation} \label{sec:app}

In this section, using Theorem \ref{thm:character-factorization}, we establish a relation between $m$-hypermaps and $m$-constellations in arbitrary genus that generalizes the quadrangulation relation. We then recover a result in \cite{chapuy2009asymptotic} on the asymptotic behavior of $m$-hypermaps related to that of $m$-constellations. Finally, by exploiting symmetries of the generating function, we are able to arrange our generalized relation in a form with all coefficients being positive integers, for which there might exist a combinatorial explanation.

\subsection{From series to numbers}
We start by a link between the series $R_H$ and $R_C$, using Theorem \ref{thm:character-factorization}.

\begin{prop} \label{prop:link-of-hypermaps-and-constellation-series}
The generating series $R_H$ and $R_C$ are related by the following equation.
\[ R_H(x,\underline{\mathbf{y}},z) = \prod_{j=1}^{m} R_C \left( \left[ x_i \gets \frac{x-j+i}{m} \right], \left[ y_i \gets \frac{y_i}{m} \right], m^m z \right) \]
\end{prop}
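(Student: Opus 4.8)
The plan is to start from the character expression for $R_H$ in Proposition \ref{prop:series-in-big-characters-simplified} and feed into it the three factorization results already available: Theorem \ref{thm:character-factorization} applied to \emph{both} characters $\chi^\theta_{m\mu}$ and $\chi^\theta_{[m^n]}$, together with Lemma \ref{lem:from-Hm-back-to-H} applied to $H_\theta(x)$. Since $\chi^\theta_{m\mu}=0$ whenever $\theta$ is not $m$-splittable, the sum over $\theta\vdash mn$ in $R_H$ immediately restricts to $m$-splittable $\theta$, which by the $m$-quotient (i.e.\ $m$-split) bijection is the same as a sum over $m$-tuples $(\theta^{(1)},\dots,\theta^{(m)})$ of partitions with $\sum_i|\theta^{(i)}|=n$. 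The whole argument then reduces to checking that, once everything is substituted, each summand factorizes as a product over the components $i=1,\dots,m$, and that the $i$-th factor is exactly the substituted copy $R_C([x_k\gets (x-i+k)/m],[y_k\gets y_k/m],m^m z)$ of the right-hand side.

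First I would handle the distinguished character $\chi^\theta_{[m^n]}$. Writing $[m^n]=m\cdot[1^n]$ and applying Theorem \ref{thm:character-factorization} with $\lambda=[1^n]$, the multiset decompositions $\nu^{(1)}\uplus\cdots\uplus\nu^{(m)}=[1^n]$ are indexed by the sizes $a_i=|\nu^{(i)}|$ with $\nu^{(i)}=[1^{a_i}]$; but $\chi^{\theta^{(i)}}_{[1^{a_i}]}$ vanishes unless $a_i=|\theta^{(i)}|$, so exactly one decomposition survives, giving $\chi^\theta_{[m^n]}=n!\,\sgn_\theta\prod_{i=1}^m f^{\theta^{(i)}}/|\theta^{(i)}|!$ (using $z_{[1^{a}]}=a!$ and $\chi^{\theta^{(i)}}_{[1^{|\theta^{(i)}|}]}=f^{\theta^{(i)}}$). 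The decisive simplification is that both applications of Theorem \ref{thm:character-factorization} produce the \emph{same} $\sgn_\theta$, so the product $\sgn_\theta\cdot\sgn_\theta=1$ disappears; likewise the factor $z_\mu$ coming from the factorization of $\chi^\theta_{m\mu}$ cancels the $z_\mu^{-1}$ already in $R_H$, and $\tfrac{z^n}{n!}\cdot n!=z^n$.

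The heart of the proof is the reindexing. Substituting the factorization of $\chi^\theta_{m\mu}$ converts the inner sum $\sum_{\mu\vdash n}$ together with the multiset sum $\sum_{\mu^{(1)}\uplus\cdots\uplus\mu^{(m)}=\mu}$ into a single sum over ordered tuples $(\mu^{(1)},\dots,\mu^{(m)})$ with $|\mu^{(i)}|=|\theta^{(i)}|$, since each such tuple has a unique multiset union. I would then invoke the multiplicativity of the weights under multiset union, namely $l(\mu)=\sum_i l(\mu^{(i)})$ and $y_\mu=\prod_i y_{\mu^{(i)}}$, and the factorization $H_\theta(x)=m^{mn}\prod_i\prod_{j=0}^{m-1}H_{\theta^{(i)}}((x-i+j+1)/m)$ of Lemma \ref{lem:from-Hm-back-to-H}, to see that every summand becomes a product over $i=1,\dots,m$. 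Distributing $z^n=\prod_i z^{n_i}$ and $m^{mn}=\prod_i m^{mn_i}$ with $n_i=|\theta^{(i)}|$, and using the Cauchy-product identity $\sum_n\sum_{\sum n_i=n}\prod_i(\cdots)=\prod_i\sum_{n_i}(\cdots)$, the series factorizes into a genuine product of $m$ series. Comparing the $i$-th series with the definition of $R_C$ in Proposition \ref{prop:series-in-big-characters-simplified} — matching $z_{\mu^{(i)}}^{-1}$, $f^{\theta^{(i)}}\chi^{\theta^{(i)}}_{\mu^{(i)}}$, the substitution $y_k\gets y_k/m$ producing the factor $m^{-l(\mu^{(i)})}$, the rescaling $z\gets m^m z$, and the arguments $(x-i+j+1)/m=(x-i+k)/m$ with $k=j+1$ against $x_k\gets(x-i+k)/m$ — identifies it with the $i$-th factor of the claimed product, after relabelling the product index $i$ as $j$.

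The main obstacle is bookkeeping rather than anything conceptual: one must check that every weight lines up exactly, in particular that the index shift in Lemma \ref{lem:from-Hm-back-to-H} matches the shift $x_i\gets(x-j+i)/m$ in the statement, and that no stray multinomial factors survive when passing from the double sum over $\mu$ and its decompositions to the product over components (they do not, precisely because the $z_\mu$ from Theorem \ref{thm:character-factorization} has already been absorbed). One point deserving care is the constant term: for the identity to hold literally, $R_H$ and $R_C$ must be read as formal power series including the $n=0$ (empty) term equal to $1$ — which is in any case forced by the presence of $\log R_H$ in \eqref{eq:H-to-RH} — so that tuples $(\theta^{(i)})$ with some empty components, which correspond under the $m$-split bijection to genuinely nonempty $m$-splittable $\theta$, are correctly accounted for and the product of the $m$ series does not spuriously vanish in low degree.
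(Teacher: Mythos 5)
Your proposal is correct and follows essentially the same route as the paper: apply Theorem \ref{thm:character-factorization} to both $\chi^\theta_{[m^n]}$ and $\chi^\theta_{m\mu}$ (so the two $\sgn_\theta$ factors cancel and $z_\mu$ cancels the $z_\mu^{-1}$ in $R_H$), combine with Lemma \ref{lem:from-Hm-back-to-H}, and reindex over the $m$-split to factor the series into a product of substituted copies of $R_C$. Your explicit bookkeeping of the reindexing and the remark about the $n=0$ constant term are details the paper leaves implicit, but the argument is the same.
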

\begin{proof}
We take the expressions of $R_H$ and $R_C$ from Proposition \ref{prop:series-in-big-characters-simplified}. We observe that, in the expression of $R_H$, we only need to consider those $\theta$ that are $m$-splittable according to Theorem \ref{thm:character-factorization}. For such $\theta$, let $(\theta^{(1)}, \ldots, \theta^{(m)})$ be its $m$-split, and we have the following equality derived from Theorem \ref{thm:character-factorization}.

\begin{displaymath}
\chi_{[m^n]}^{\theta} \chi_{m\mu}^{\theta} = n! z_{\mu} \sum_{\mu^{(1)} \uplus \cdots \uplus \mu^{(m)} = \mu} \prod_{i=1}^m \frac{f^{\theta^{(i)}} \chi_{\mu^{(i)}}^{\theta^{(i)}}}{(|\theta^{(i)}|)! z_{\mu^{(i)}}}
\end{displaymath}

We then substitute the equality above and Lemma \ref{lem:from-Hm-back-to-H} into the expression of $R_H$ in Corollary \ref{prop:series-in-big-characters-simplified} to factorize $R_H$ into a product of $R_C$ evaluated on different points as follows:

\begin{align*}
&\quad R_H(x,\underline{\mathbf{y}},z)\\
&= \sum_{n \geq 1} (m^m z)^n \sum_{\mu \vdash n} y_\mu m^{-l(\mu)} \sum_{\substack{\theta \vdash mn \\ \mu^{(1)} \uplus \cdots \uplus \mu^{(m)} = \mu}} \prod_{i=1}^m \left( \frac{f^{\theta^{(i)}} \chi_{\mu^{(i)}}^{\theta^{(i)}}}{(|\theta^{(i)}|)! z_{\mu^{(i)}}}  \prod_{j=0}^{m-1} H_{\theta^{(i)}} \left( \frac{x-i+j+1}{m} \right) \right) \\
&= \prod_{j=1}^{m} R_C \left( \left[ x_i \gets \frac{x-j+i}{m} \right], \left[ y_i \gets \frac{y_i}{m} \right], m^m z \right).
\end{align*}
\end{proof}

This link between $R_H$ and $R_C$ can be translated directly into a link between the series $H(x, \underline{\mathbf{y}}, z, u)$ of $m$-hypermaps and the series $C(\underline{\mathbf{x}}, \underline{\mathbf{y}}, z, u)$ of $m$-constellations, resulting in our main result as follows.

\begin{thm}[Relations of series of constellations and hypermaps] \label{thm:link-between-H-and-C}
The generating series of $m$-constellations and $m$-hypermaps are related by the following formula:
\[ H(x,\underline{\mathbf{y}},z,u) = m \sum_{j=1}^{m} C \left( \left[ x_i \gets \frac{x+(i-j)u}{m} \right], \left[ y_i \gets \frac{y_i}{m} \right], m^{m-1} z, u \right). \]
\end{thm}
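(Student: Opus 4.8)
The plan is to combine the three ingredients already in place: the dictionary \eqref{eq:H-to-RH} expressing $H$ through a logarithmic $z$-derivative of $R_H$, the analogous dictionary \eqref{eq:C-to-RC} relating $C$ and $R_C$, and the multiplicative factorization of $R_H$ into a product of shifted copies of $R_C$ from Proposition \ref{prop:link-of-hypermaps-and-constellation-series}. The guiding idea is that taking a logarithm turns the \emph{product} of Proposition \ref{prop:link-of-hypermaps-and-constellation-series} into a \emph{sum}, which is exactly the shape of the right-hand side we are aiming for; the rest is careful bookkeeping of substitutions and powers of $u$.

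First I would take the logarithm of Proposition \ref{prop:link-of-hypermaps-and-constellation-series}, obtaining $\log R_H(x,\underline{\mathbf{y}},z) = \sum_{j=1}^m \log R_C$ with the inner substitution $x_i \gets (x-j+i)/m$, $y_i \gets y_i/m$, and third argument $m^m z$. Then I would apply the operator $z\,\partial/\partial z$ term by term. The key observation here is that $z\,\partial/\partial z$ is invariant under rescaling of the $z$-variable: if $z' = c z$ with $c$ constant, then $z\,\partial/\partial z = z'\,\partial/\partial z'$. Consequently, applying $z\,\partial/\partial z$ to $\log R_C(\cdots, m^m z)$ simply produces $\bigl(z'\tfrac{\partial}{\partial z'}\log R_C\bigr)$ evaluated at $z' = m^m z$, leaving the substitution in the remaining variables untouched. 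This yields a clean summed expression for $z\tfrac{\partial}{\partial z}\log R_H$ in terms of $z\tfrac{\partial}{\partial z}\log R_C$.

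Next I would insert this into \eqref{eq:H-to-RH}, carrying out the outer substitution $x \gets xu^{-1}$, $y_i \gets y_i u^{-1}$, $z \gets \tfrac1m z u^{m-1}$ and multiplying by the prefactor $mu^2$. Under this substitution the third argument $m^m z$ becomes $m^{m-1} z u^{m-1}$, the $y_i$-argument becomes $y_i u^{-1}/m$, and the $x_i$-argument becomes $(xu^{-1}-j+i)/m$. The final step is to recognize each summand as a specialization of $C$ through \eqref{eq:C-to-RC}: solving $\hat x_i u^{-1} = (xu^{-1}-j+i)/m$ gives $\hat x_i = (x+(i-j)u)/m$, solving $\hat y_i u^{-1} = y_i u^{-1}/m$ gives $\hat y_i = y_i/m$, and solving $\hat z u^{m-1} = m^{m-1} z u^{m-1}$ gives $\hat z = m^{m-1} z$. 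These are exactly the arguments appearing in the claimed formula, and the leading factor $mu^2$ splits as $m$ times the $u^2$ built into \eqref{eq:C-to-RC}, so the summand over $j$ reproduces $m\sum_{j=1}^m C\bigl([x_i\gets (x+(i-j)u)/m],[y_i\gets y_i/m],m^{m-1}z,u\bigr)$, which is the theorem.

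I expect the only genuinely delicate point to be the compatibility of the logarithmic derivative $z\,\partial/\partial z$ with the two distinct $z$-rescalings that appear—the $m^m z$ coming from Proposition \ref{prop:link-of-hypermaps-and-constellation-series} and the $\tfrac1m z u^{m-1}$ coming from the $H$–$R_H$ dictionary—together with the precise matching of the powers of $u$ so that the $x_i$-argument collapses to $(x+(i-j)u)/m$ and the overall constant is exactly $m$. Once the scale-invariance of $z\,\partial/\partial z$ is used to push the operator through both rescalings, everything else reduces to a direct, if careful, verification of the three substitutions.
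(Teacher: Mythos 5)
Your proposal is correct and matches the paper's own (very terse) proof, which simply states that the theorem follows by substituting \eqref{eq:H-to-RH} and \eqref{eq:C-to-RC} into Proposition \ref{prop:link-of-hypermaps-and-constellation-series}; you have merely spelled out the bookkeeping, and your verification of the three substitutions and of the scale-invariance of $z\,\partial/\partial z$ is accurate.
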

\begin{proof}
This comes directly from a substitution of \eqref{eq:H-to-RH} and \eqref{eq:C-to-RC} into Proposition \ref{prop:link-of-hypermaps-and-constellation-series}.
\end{proof}

We note $H^{(g)}(x,\underline{\mathbf{y}},z) = [u^{2g}]H(x,\underline{\mathbf{y}},z,u)$ and $C^{(g)}(\underline{\mathbf{x}},\underline{\mathbf{y}},z) = [u^{2g}]C(\underline{\mathbf{x}},\underline{\mathbf{y}},z,u)$ the generating functions of $m$-hypermaps and $m$-constellations of genus $g$ respectively. We can now express the following corollary concerning the link between $m$-hypermaps and $m$-constellations with respect to the genus.

\begin{coro} \label{coro:link-hypermap-constellation-with-genus}
We have the following relation between the generating series $H^{(g)}$ and $C^{(g)}$:
\[ H^{(g)}(x,\underline{\mathbf{y}},z) = \sum_{k=0}^{g} \frac{m^{2g-2k}}{m(2k)!} \Bigg( \sum_{j=1}^{m} \bigg(\sum_{i=1}^{m} (i-j)\frac{\partial}{\partial x_i} \bigg)^{2k} C^{(g - k)} \Bigg)([x_i \gets x], \underline{\mathbf{y}}, z). \]
\end{coro}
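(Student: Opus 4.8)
The plan is to extract the coefficient of $u^{2g}$ from the identity in Theorem~\ref{thm:link-between-H-and-C}. Writing $C(\underline{\mathbf{x}},\underline{\mathbf{y}},z,u) = \sum_{g' \ge 0} u^{2g'} C^{(g')}(\underline{\mathbf{x}},\underline{\mathbf{y}},z)$, the variable $u$ enters the right-hand side through two independent channels: the genus-marking power $u^{2g'}$, and the explicit appearance of $u$ inside the substitution $x_i \gets (x+(i-j)u)/m$. I would isolate each channel and then recombine the powers of $u$.

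First I would dispose of the scalings $y_i \gets y_i/m$ and $z \gets m^{m-1}z$ (together with the $1/m$ in the $x_i$) by a homogeneity argument. Every monomial $\prod_i x_i^{V_i}\, y_\mu\, z^n$ appearing in $C^{(g')}$ records an $m$-constellation of genus $g'$ with $V_i$ vertices of color $i$, $n$ hyperedges, and $F = l(\mu)$ hyperfaces. Reading it as a map with $E = mn$ edges and $n + F$ faces, Euler's formula gives $\sum_i V_i + F = 2 - 2g' + (m-1)n$. Tracking the powers of $m$ produced by the three scalings, a monomial of genus $g'$ acquires the factor $m^{-\sum_i V_i - F + (m-1)n} = m^{2g'-2}$, which depends only on $g'$. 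Hence
\[ C^{(g')}\!\left(\left[x_i \gets \tfrac{x+(i-j)u}{m}\right], \left[y_i \gets \tfrac{y_i}{m}\right], m^{m-1}z\right) = m^{2g'-2}\, C^{(g')}\!\left([x_i \gets x+(i-j)u], \underline{\mathbf{y}}, z\right), \]
so that the scalings contribute only a uniform power of $m$ and can be pulled outside.

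Next I would Taylor expand the remaining substitution. Since at $u=0$ all the arguments collapse to $x_i = x$, the expansion of $C^{(g')}([x_i \gets x+(i-j)u],\underline{\mathbf{y}},z)$ around $u = 0$ reads $\sum_{l \ge 0} \tfrac{u^l}{l!}\bigl(\sum_i (i-j)\tfrac{\partial}{\partial x_i}\bigr)^l C^{(g')}$ evaluated at $[x_i \gets x]$, which produces exactly the differential operator appearing in the statement. Substituting the two preceding identities into Theorem~\ref{thm:link-between-H-and-C} gives $H$ as a triple sum over $j$, $g'$ and $l$, with $u$-degree $2g' + l$. Extracting $[u^{2g}]$ forces $l = 2(g-g')$, which is automatically even; writing $k = g - g'$ this index ranges over $0 \le k \le g$, and the overall power of $m$ becomes $m \cdot m^{2(g-k)-2} = m^{2g-2k}/m$, matching the prefactor $\tfrac{m^{2g-2k}}{m(2k)!}$.

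The main obstacle I anticipate is the homogeneity step: one must verify that the three independent scalings conspire, via the Euler relation $\sum_i V_i + F = 2 - 2g' + (m-1)n$, to give a factor depending only on the genus and not on the finer statistics $V_i$, $F$, $n$. Once this is in place, the remaining manipulations — the Taylor expansion and the bookkeeping of the $u$- and $m$-powers — are routine.
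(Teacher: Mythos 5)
Your proposal is correct and follows essentially the same route as the paper: extract $[u^{2g}]$ from Theorem \ref{thm:link-between-H-and-C}, Taylor-expand in $u$ to produce the operator $\bigl(\sum_i (i-j)\partial_{x_i}\bigr)^{2k}$, and use the Euler relation $\sum_i V_i + F = 2-2g' + (m-1)n$ to show the three $1/m$-scalings contribute the uniform factor $m^{2g'-2}$. The only (immaterial) difference is that you apply the homogeneity step before the Taylor expansion, whereas the paper does it at the end.
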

\begin{proof}
We want to compute $H^{(g)}(x,\underline{\mathbf{y}},z) = [u^{2g}]H(x,\underline{\mathbf{y}},z,u)$.
\begin{align*}
&\quad [u^{2g}]H(x,\underline{\mathbf{y}},z,u)  \\
&= m \sum_{j=1}^{m} \sum_{k=0}^{g} [u^{2k}]C^{(g - k)}\left( \left[ x_i \gets \frac{x+(i-j)u}{m} \right], \left[ y_i \gets \frac{y_i}{m} \right], m^{m-1} z \right) \\
&= m \sum_{j=1}^{m} \sum_{k=0}^{g} \frac{1}{(2k)!} \left( \frac{\partial}{\partial u} \right)^{2k} C^{(g - k)}\left( \left[ x_i \gets \frac{x+(i-j)u}{m} \right], \left[ y_i \gets \frac{y_i}{m} \right], m^{m-1} z \right) \bigg|_{u=0} \\
&= m \sum_{j=1}^{m} \sum_{k=0}^{g} \frac{1}{(2k)!} \Bigg( \bigg(\sum_{i=1}^{m} \frac{i-j}{m}\frac{\partial}{\partial x_i} \bigg)^{2k} C^{(g - k)} \Bigg) \left( \left[ x_i \gets \frac{x}{m} \right], \left[ y_i \gets \frac{y_i}{m} \right], m^{m-1} z \right) \\
\end{align*}
To obtain the final result, we then simplify the formula above with the fact that each term in $C^{(g)}$ has the form $x_1^{v_1} \cdots x_m^{v_m} y_{\phi} z^{f_2}$ with $v_1 + \cdots +v_m - mf_2 + |\phi| + f_2 = 2 - 2g$, according to the Euler relation.
\end{proof}

We can further generalize these results. Let $D$ be a subset of $\naturals^{*}$. We define $(m,D)$-hypermaps and $(m,D)$-constellations as $m$-hypermaps and $m$-constellations with the restriction that every hyperface has its degree in $mD$. We note respectively $H_D(x,\underline{\mathbf{y}},z,u)$ and $C_D(\underline{\mathbf{x}},\underline{\mathbf{y}},z,u)$ their generating functions. We now have the following corollary.

\begin{coro}[Main result in the form of series] \label{coro:link-hypermap-constellation-with-genus-extended}
We have the following equations:
\[ H_D(x,\underline{\mathbf{y}},z,u) = m \sum_{j=1}^{m} C_D\left( \left[ x_i \gets \frac{x+(i-j)u}{m} \right], \left[ y_i \gets \frac{y_i}{m} \right], m^{m-1} z, u \right) \]
\[ H_D^{(g)}(x,\underline{\mathbf{y}},z) = \sum_{k=0}^{g} \frac{m^{2g-2k}}{m(2k)!} \Bigg( \sum_{j=1}^{m} \bigg(\sum_{i=1}^{m} (i-j)\frac{\partial}{\partial x_i} \bigg)^{2k} C_D^{(g - k)} \Bigg)([x_i \gets x], \underline{\mathbf{y}}, z). \]
\end{coro}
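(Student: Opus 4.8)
The plan is to show that the two displayed equations for $(m,D)$-hypermaps and $(m,D)$-constellations are obtained by \emph{exactly} repeating the arguments already used for Theorem~\ref{thm:link-between-H-and-C} and Corollary~\ref{coro:link-hypermap-constellation-with-genus}, the only difference being that the infinite variable set $\underline{\mathbf{y}}$ is now restricted so that only hyperfaces of degree in $mD$ are allowed. The key observation is that imposing the constraint $D$ on hyperface degrees corresponds, at the level of the series $R_H$ and $R_C$, to restricting the inner sum over partitions $\mu$ to those $\mu$ whose parts all lie in $D$ (equivalently, setting $y_i = 0$ for $i \notin D$). Since this is a purely diagonal specialization of the $\underline{\mathbf{y}}$-variables, it commutes with every operation performed in the unrestricted case.

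Concretely, I would first define the restricted series $R_{H,D}$ and $R_{C,D}$ by copying the definitions of $R_H$ and $R_C$ but summing only over $\mu$ with all parts in $D$. The relations \eqref{eq:H-to-RH} and \eqref{eq:C-to-RC} hold verbatim with $H, R_H$ replaced by $H_D, R_{H,D}$ and $C, R_C$ by $C_D, R_{C,D}$, because the substitution $[y_i \gets y_i u^{-1}]$ acts on each $y_i$ separately and the passage from general to transitive factorizations via the logarithm preserves the face-degree restriction (a connected object has all its faces among those counted). The crucial step is to check that Proposition~\ref{prop:link-of-hypermaps-and-constellation-series} survives the restriction: in its proof the factorization of $R_H$ into a product of $R_C$'s is driven by the character identity from Theorem~\ref{thm:character-factorization} and by Lemma~\ref{lem:from-Hm-back-to-H}, and the splitting $\mu^{(1)} \uplus \cdots \uplus \mu^{(m)} = \mu$ distributes the parts of $\mu$ among the $m$ factors. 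Restricting $\mu$ to have parts in $D$ automatically forces each $\mu^{(i)}$ to have parts in $D$ as well, so the same product formula holds with every occurrence of $R_H, R_C$ replaced by $R_{H,D}, R_{C,D}$ and with $[y_i \gets y_i/m]$ acting trivially on the forbidden indices. This yields the restricted analogue of Proposition~\ref{prop:link-of-hypermaps-and-constellation-series}.

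Having established the restricted series identity, the first displayed equation follows by substituting the restricted versions of \eqref{eq:H-to-RH} and \eqref{eq:C-to-RC}, exactly as in the proof of Theorem~\ref{thm:link-between-H-and-C}; the second displayed equation then follows by the genus extraction $[u^{2g}]$ and the Taylor-in-$u$ computation carried out verbatim in the proof of Corollary~\ref{coro:link-hypermap-constellation-with-genus}, since the Euler-relation simplification used there depends only on the structure of each monomial $x_1^{v_1}\cdots x_m^{v_m} y_\phi z^{f_2}$ and not on which $y$-indices are present. I expect no genuine obstacle here: the entire content is the remark that the degree restriction $D$ is a diagonal specialization of $\underline{\mathbf{y}}$ that is compatible with multiset partitioning. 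The one point deserving explicit mention is the closure of the multiset decomposition under the restriction---that parts of $\mu$ lying in $D$ force all parts of each $\mu^{(i)}$ to lie in $D$---which is immediate but is the sole place where the restriction interacts nontrivially with the proof machinery.
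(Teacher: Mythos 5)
Your proposal is correct and rests on exactly the same idea as the paper: the degree restriction $D$ is the specialization $y_i = 0$ for $i \notin D$, which is compatible with the substitution $[y_i \gets y_i/m]$. The paper simply applies this specialization directly to the already-established identities of Theorem~\ref{thm:link-between-H-and-C} and Corollary~\ref{coro:link-hypermap-constellation-with-genus} (a one-line argument), so your longer re-derivation through restricted series $R_{H,D}$, $R_{C,D}$ is valid but unnecessary.
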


\begin{proof}
By specifying $y_i=0$ for $i \notin D$ in Corollary \ref{coro:link-hypermap-constellation-with-genus}, we obtain our result.
\end{proof}

By taking $m=2$ and $D=\{ 2 \}, \{ p \}$ or $D$ arbitrary, we recover the quadrangulation relation in \cite{jackson1990character} and its extensions in \cite{jackson1990character-1} and \cite{jackson1999combinatorial} respectively. We define $C^{(g, a_1, \ldots, a_{m-1})}_{n,m,D}$ to be the number of rooted $m$-constellations with $n$ hyperedges, and hyperface degree restricted by the set $D$, with $a_i$ marked vertices of color $i$ for $i$ from $1$ to $m-1$. The number $H^{(g)}_{n,m,D}$ is the counterpart for rooted $m$-hypermaps without markings. These numbers can be easily obtained from corresponding generating series by extracting appropriate coefficients evaluated with $y_i=1$.

According to Theorem 3.1 in \cite{chapuy2009asymptotic}, the number $C^{(g)}_{n,m,D} = C^{(g,0,\ldots,0)}_{n,m,D}$ of $(m,D)$-constellations with $n$ hyperedges without marking grows asymptotically in $ \Theta(n^{\frac{5}{2}(g-1)} \rho_{m,D}^{n})$ when $n$ tends to infinity in multiples of $\operatorname{gcd}(D)$ for some $\rho_{m,D}>0$. Using Corollary \ref{coro:link-hypermap-constellation-with-genus-extended}, we now give a new proof of Theorem 3.2 of \cite{chapuy2009asymptotic} about the asymptotic behavior of the number of $(m,D)$-hypermaps.

\begin{coro}[Asymptotic behavior of $(m,D)$-hypermaps]
For a fixed $g$, when $n$ tends to infinity, we have the following asymptotic behavior of $(m,D)$-hypermaps:
\[ H^{(g)}_{n,m,D} \sim m^{2g} C^{(g)}_{n,m,D}. \]
\end{coro}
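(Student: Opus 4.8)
The plan is to extract the asymptotic statement from the exact relation in Corollary \ref{coro:link-hypermap-constellation-with-genus-extended}, treating the right-hand side as a sum of contributions indexed by $k$ from $0$ to $g$ and showing that the $k=0$ term dominates. First I would write down the coefficient extraction explicitly: the number $H^{(g)}_{n,m,D}$ is obtained from $H_D^{(g)}(x,\underline{\mathbf{y}},z)$ by setting $y_i=1$, extracting the coefficient of $z^n$, and setting $x=1$ (since there is no marking). Applying this to the formula
\[ H_D^{(g)}(x,\underline{\mathbf{y}},z) = \sum_{k=0}^{g} \frac{m^{2g-2k}}{m(2k)!} \Bigg( \sum_{j=1}^{m} \bigg(\sum_{i=1}^{m} (i-j)\frac{\partial}{\partial x_i} \bigg)^{2k} C_D^{(g - k)} \Bigg)([x_i \gets x], \underline{\mathbf{y}}, z), \]
each summand becomes a coefficient extraction applied to a differential operator of order $2k$ in the color-variables acting on $C_D^{(g-k)}$, after which all $x_i$ are identified to $x=1$.

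The key observation I would isolate is what the differential operators do at the level of counting. Applying $\partial/\partial x_i$ to $C_D^{(g-k)}$ and then setting all $x_i=1$ corresponds combinatorially to marking a vertex of color $i$; thus the $k$-th term is, up to the explicit scalar $m^{2g-2k}/(m(2k)!)$ and the combinatorial factors coming from the operator $\big(\sum_{i,j}(i-j)\partial/\partial x_i\big)^{2k}$, a linear combination of marked-vertex constellation counts $C^{(g-k, a_1, \ldots, a_{m-1})}_{n,m,D}$ with $a_1 + \cdots + a_{m-1} = 2k$. The $k=0$ term is simply $m^{2g}$ times the unmarked count $C^{(g)}_{n,m,D}$, which is exactly the claimed leading term. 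So the statement reduces to showing that for each $k \geq 1$, the total contribution of the $k$-th term is $o\big(C^{(g)}_{n,m,D}\big)$.

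For the asymptotic comparison I would invoke the growth rate quoted from Theorem 3.1 of \cite{chapuy2009asymptotic}: $C^{(g)}_{n,m,D} = \Theta\big(n^{\frac{5}{2}(g-1)} \rho_{m,D}^{n}\big)$. The marked counts appearing in the $k$-th term are genus $g-k$ constellations with $2k$ marked vertices; marking a vertex is a polynomial-factor operation (a constellation of genus $g-k$ with $n$ hyperedges has $O(n)$ vertices of each color), so $C^{(g-k, a_1, \ldots, a_{m-1})}_{n,m,D} = O\big(n^{2k}\, C^{(g-k)}_{n,m,D}\big) = O\big(n^{2k} n^{\frac{5}{2}(g-k-1)} \rho_{m,D}^{n}\big)$. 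Comparing exponents, the $k$-th term scales like $n^{2k + \frac{5}{2}(g-k-1)} = n^{\frac{5}{2}(g-1) - \frac{k}{2}}\rho_{m,D}^{n}$ relative to the leading $n^{\frac{5}{2}(g-1)}\rho_{m,D}^n$, so each $k \geq 1$ term is smaller by a factor $n^{-k/2} \to 0$. Summing the finitely many $k$ from $1$ to $g$ gives $H^{(g)}_{n,m,D} = m^{2g} C^{(g)}_{n,m,D}\big(1 + O(n^{-1/2})\big)$, which is the desired asymptotic.

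The main obstacle I anticipate is making the polynomial bound on the marked counts fully rigorous, namely that marking $2k$ vertices inflates the count by at most a polynomial factor $n^{2k}$ while strictly lowering the genus by $k$. This requires that the number of vertices of each color in a genus $g-k$ constellation with $n$ hyperedges is $O(n)$, which follows from the Euler relation, and that the subexponential growth constant $\rho_{m,D}$ is the same across all genera (as asserted in \cite{chapuy2009asymptotic}, where only the polynomial prefactor exponent depends on $g$). Once these uniformity facts are in hand the estimate is routine, and one must only take care that the coefficients $m^{2g-2k}/(m(2k)!)$ and the operator-induced constants are genuinely constants independent of $n$, so that they cannot affect the order of growth.
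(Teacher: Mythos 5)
Your proposal is correct and follows essentially the same route as the paper: both extract the $k$-indexed terms from Corollary \ref{coro:link-hypermap-constellation-with-genus-extended}, interpret the order-$2k$ differential operators as marking vertices (hence at most a polynomial factor $O(n^{2k})$ since vertex counts are $O(n)$), and use the $\Theta\bigl(n^{\frac{5}{2}(g-1)}\rho_{m,D}^{n}\bigr)$ growth from Chapuy's Theorem 3.1 to show each $k\geq 1$ term is $O\bigl(n^{\frac{5}{2}(g-1)-\frac{k}{2}}\rho_{m,D}^{n}\bigr)$ and hence negligible, with the $k=0$ term supplying the constant $m^{2g}$. Your write-up is in fact somewhat more explicit than the paper's about the uniformity of $\rho_{m,D}$ across genera and about verifying the leading constant.
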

\begin{proof}
We observe that, in the second part of Corollary \ref{coro:link-hypermap-constellation-with-genus-extended}, for a fixed $k$, the number of differential operators applied to $C_D^{(g-k)}$ does not depend on $n$, and they are all of order $2k$. Since in an $m$-hypermap, the number of vertices with a fixed color $i$ is bounded by the number of hyperedges $n$, the contribution of the term with $k = t$ is $O(n^{\frac{5}{2}(g-t-1)+2t} \rho_{m,D}^{n})=O(n^{\frac{5}{2}(g-1)-\frac{1}{2}t} \rho_{m,D}^{n})$. The dominant term is therefore given by the case $k=0$, with $C^{(g)}_{n,m,D} = \Theta(n^{\frac{5}{2}(g-1)} \rho_{m,D}^{n})$, and we can easily verify the multiplicative constant.
\end{proof}

Our generalized relation, alongside with its proof, is a refinement of the asymptotic enumerative results established in \cite{chapuy2009asymptotic} on the link between $m$-hypermaps and $m$-constellations.

\subsection{Positivity of coefficients in the expression of $H^{(g)}$} \label{apdx:pos}

In Corollary \ref{coro:link-hypermap-constellation-with-genus}, the generating function $H^{(g)}$ of $m$-hypermaps of genus $g$ is expressed as a sum of generating functions $C^{(g-k)}$ of $m$-constellations with smaller genus applied to various differential operators. It is not obvious that this sum can be arranged into a sum with positive coefficients of all terms, but we will show that it is indeed the case. For $m=3$ and $m=4$, we have the following relations.

\begin{coro}[Generalization of the quadrangulation relation, special case $m=3,4$] \label{coro:counting-relation-m-3-4}
For $m=3,4$, we have
\begin{displaymath}
H^{(g)}_{n,3,D} = \sum_{i=0}^{g} 3^{2g-2i} \sum_{l=0}^{2i} \frac{2 \cdot 2^{l} + (-1)^{l}}{3} C^{(g-i, l, 2i-l)}_{n,3,D},
\end{displaymath}

\begin{displaymath}
H^{(g)}_{n,4,D} = \sum_{i=0}^{g} 4^{2g-2i} \sum_{l_1, l_2 \geq 0, l_1 + l_2 \leq 2i} \frac{2 (3^{l_1}2^{l_2} + 2^{l_2}(-1)^{l_1})}{4} C^{(g-i,l_1,l_2,2i-l_1-l_2)}_{n,4,D}.
\end{displaymath}
\end{coro}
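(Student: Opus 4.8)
The plan is to derive both identities directly from the genus-graded master relation in Corollary~\ref{coro:link-hypermap-constellation-with-genus-extended}, specialized to $m=3$ and $m=4$, by expanding the differential operator and reinterpreting derivatives as vertex markings. First I would expand $\big(\sum_{i=1}^m (i-j)\tfrac{\partial}{\partial x_i}\big)^{2k}$ by the multinomial theorem into $\sum_{a_1+\cdots+a_m=2k}\binom{2k}{a_1,\dots,a_m}\prod_i (i-j)^{a_i}\,\partial_{x_i}^{a_i}$, and sum over $j$. Applying $\prod_i \partial_{x_i}^{a_i}$ to $C^{(g-k)}_D$ and then setting every $x_i=1$ produces, up to the factor $\prod_i a_i!$ coming from the falling factorials, the number of constellations carrying a chosen set of $a_i$ marked vertices of each colour $i$ (including colour $m$); writing $\check C^{(g-k,a_1,\dots,a_m)}$ for this fully marked count, the factor $\prod_i a_i!$ cancels the multinomial coefficient, since $\binom{2k}{a_1,\dots,a_m}\prod_i a_i!=(2k)!$. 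The prefactor $\tfrac{m^{2g-2k}}{m\,(2k)!}$ then collapses to $m^{2g-2k-1}$, leaving the coefficient of $\check C^{(g-k,\vec a)}$ equal to $m^{2g-2k-1}\sum_{j=1}^m\prod_{i=1}^m (i-j)^{a_i}$.

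Two reductions make this tractable. Since the $j$-th summand contains the factor $(j-j)^{a_j}=0^{a_j}$, it vanishes unless $a_j=0$; hence $\sum_j\prod_i(i-j)^{a_i}$ is nonzero only when at least one colour is left unmarked. Second, Proposition~\ref{prop:series-in-big-characters-simplified} exhibits $R_C$, and therefore $C$ and each $C^{(g)}_D$, as symmetric in $x_1,\dots,x_m$ (the only $x$-dependence being $\prod_i H_\theta(x_i)$), so $\check C^{(g-k,a_1,\dots,a_m)}$ is invariant under the full symmetric group $S_m$ acting on $(a_1,\dots,a_m)$. Combining the two, every surviving term can be rewritten, using the $S_m$-symmetry, with colour $m$ unmarked, i.e.\ as an $(m-1)$-index count $C^{(g-k,a_1,\dots,a_{m-1})}_{n,m,D}$ with $a_1+\cdots+a_{m-1}=2k$; this is exactly the shape of the right-hand sides, with $k$ playing the role of the outer index $i$. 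It then remains to collect, for each $S_m$-orbit of marking multisets, the total weight $m^{2g-2k-1}\sum_{\vec a\in\mathrm{orbit}}\sum_j\prod_i(i-j)^{a_i}$, and to check that the advertised coefficients reproduce these orbit totals.

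Finally I would evaluate the surviving sums in closed form. Because a colour is always unmarked, the inner sum over $j$ collapses to the few terms in which a forced colour carries exponent $0$, and summing over the orbit turns the weight into a geometric sum: for $m=3$ the orbit of a marking $(l,2k-l,0)$ contributes $2\big(2^{\,l}+2^{\,2k-l}+(-1)^l\big)$, which is exactly what the coefficients $\tfrac{2\cdot 2^{\,l}+(-1)^l}{3}$ and $\tfrac{2\cdot 2^{\,2k-l}+(-1)^l}{3}$, assigned to $C^{(g-k,l,2k-l)}$ and to its colour-swap $C^{(g-k,2k-l,l)}$ (equal by symmetry), add up to after multiplication by $3^{2g-2k}$. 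For $m=4$ the same mechanism yields a double geometric sum in the two free marking exponents, producing the factor $3^{\,l_1}2^{\,l_2}+2^{\,l_2}(-1)^{l_1}$. The main obstacle I expect is precisely this last bookkeeping: because the $S_m$-symmetry renders the $(m-1)$-index counts linearly dependent, the stated per-term coefficients are not individually forced, so equality must be verified orbit-by-orbit rather than term-by-term, with separate care for orbits having repeated or zero entries (whose stabilizers shrink the orbit), and the resulting one- and two-fold geometric summations must be carried out to land on the claimed closed forms.
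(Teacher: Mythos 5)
Your proposal follows essentially the same route as the paper: starting from Corollary~\ref{coro:link-hypermap-constellation-with-genus-extended}, expand the operator multinomially, observe that the $j$-th summand forces colour $j$ to be unmarked, invoke the symmetry of $C^{(g)}$ in the $x_i$ (Proposition~\ref{prop:constellation-symmetric}) to reduce to $m-1$ marking indices, and evaluate the coefficients explicitly for $m=3,4$; your orbit totals $2\bigl(2^{l}+2^{2k-l}+(-1)^{l}\bigr)$ and $2\bigl(3^{l_1}2^{l_2}+2^{l_2}(-1)^{l_1}\bigr)$ agree with the paper's $d^{(m)}_{k_1,\ldots,k_{m-1}}$. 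The only organizational difference is that the paper fixes a cyclic relabelling (the coefficients $e^{(m),j}$) and pairs $j$ with $m+1-j$ via the operators $D^{(2k)}$, whereas you sum over full $S_m$-orbits, which correctly accounts for the non-uniqueness of the per-term coefficients that you rightly flag.
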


We notice that the coefficients are always positive integers. This is not a coincidence. In fact, by carefully rearranging terms, we can obtain the following relation, whose proof is the subject of this section.

\begin{coro}[Generalization of the quadrangulation relation, for arbitrary $m$] \label{coro:general-counting-relation-in-numbers}
With certain coefficients $c^{(m)}_{k_1, \ldots, k_{m-1}}$ all integral and positive, we have
\begin{displaymath}
H^{(g)}_{n,m,D} = \sum_{i=0}^{g} m^{2g-2i} \sum_{\substack{k_1, \ldots, k_{m-1} \geq 0 \\ k_1 + \cdots + k_{m-1} = 2i}} c^{(m)}_{k_1, \ldots, k_{m-1}} C^{(g-i, k_1, \ldots, k_{m-1})}_{n,m,D}.
\end{displaymath}
\end{coro}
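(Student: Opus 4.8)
The plan is to start from the second formula in Corollary~\ref{coro:link-hypermap-constellation-with-genus}, which already expresses $H^{(g)}$ as a weighted sum of differential operators applied to the $C^{(g-k)}$, and to extract the coefficient of a fixed monomial in the marking variables. Concretely, the number $H^{(g)}_{n,m,D}$ is obtained by extracting the coefficient of $x^{s} z^{n}$ (summed appropriately over $s$, then setting $y_i=1$) from $H^{(g)}_D$, while $C^{(g-i,k_1,\ldots,k_{m-1})}_{n,m,D}$ is the coefficient of $x_1^{k_1}\cdots x_{m-1}^{k_{m-1}}$ in the expansion of $C^{(g-i)}_D$ in the $x_i$ variables (the exponent of $x_m$ being forced by Euler's relation). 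So the first step is to apply the operator $\big(\sum_{i=1}^m (i-j)\partial_{x_i}\big)^{2k}$ to $C^{(g-k)}$, \emph{then} perform the diagonal substitution $x_i\gets x$, and read off how a given $C^{(g-i,k_1,\ldots,k_{m-1})}$ contributes. After differentiating and substituting, the bookkeeping reduces to a purely combinatorial/algebraic identity about the coefficients that multiply each marked-constellation count.

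The key computation is to make the coefficients $c^{(m)}_{k_1,\ldots,k_{m-1}}$ explicit. Writing $k=i$ and expanding the operator $\big(\sum_{i=1}^m (i-j)\partial_{x_i}\big)^{2i}$ by the multinomial theorem, the term that picks out $x_1^{k_1}\cdots x_{m-1}^{k_{m-1}}$ (with $k_1+\cdots+k_{m-1}=2i$) carries the factor $\binom{2i}{k_1,\ldots,k_{m-1},k_m}\prod_{l=1}^m (l-j)^{k_l}$ together with the falling-factorial factors produced by differentiating the monomial in $C^{(g-i)}$. Summing over the root index $j$ from $1$ to $m$ collapses the $j$-dependence, so I expect
\[
c^{(m)}_{k_1,\ldots,k_{m-1}} \;=\; \frac{1}{m}\binom{2i}{k_1,\ldots,k_{m-1},k_m}\sum_{j=1}^{m}\prod_{l=1}^{m}(l-j)^{k_l},
\]
up to the normalization absorbing the $\tfrac{m^{2g-2i}}{m(2i)!}$ prefactor and the diagonal substitution; here $k_m$ is determined by $k_1+\cdots+k_{m-1}=2i$ together with the Euler constraint. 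The explicit $m=3$ and $m=4$ values in Corollary~\ref{coro:counting-relation-m-3-4} then serve as a consistency check on this expression, and the displayed forms like $\tfrac{2\cdot 2^{l}+(-1)^{l}}{3}$ should emerge by evaluating $\sum_{j=1}^{3}(1-j)^{k_1}(2-j)^{k_2}(3-j)^{k_3}$ at the relevant exponents.

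The main obstacle, and the real content of the statement, is showing that each $c^{(m)}_{k_1,\ldots,k_{m-1}}$ is a \emph{positive integer}. Positivity is the delicate point: the sum $\sum_{j=1}^{m}\prod_{l=1}^{m}(l-j)^{k_l}$ contains negative summands (factors $(l-j)$ are negative for $l<j$), so cancellation must be controlled. My plan is to reindex $j$ and pair up terms, or better, to recognize the inner sum as a symmetric-function or finite-difference evaluation whose sign is governed by the total degree $2i$ being even. The evenness of the exponent sum (which holds because only $[u^{2k}]$ survives, the odd powers of $u$ vanishing by the genus parity of the generating series) is exactly what should force the alternating contributions to combine into a nonnegative quantity. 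For integrality, the multinomial coefficient is manifestly integral, and the division by $m$ should be justified by a congruence argument: the sum $\sum_{j=1}^{m}\prod_l (l-j)^{k_l}$ is invariant under shifting all the labels $l-j$ cyclically modulo $m$, so it is a sum over a complete residue system and hence divisible by $m$ after grouping. I would therefore (i) establish the closed form for the coefficient, (ii) prove $m \mid \sum_{j=1}^{m}\prod_l(l-j)^{k_l}$ by a residue/cyclic-symmetry argument, and (iii) prove nonnegativity using the parity of $2i$, most likely by exhibiting the coefficient as a manifestly nonnegative count — perhaps via surjections or a transfer-matrix interpretation of $\sum_j \prod_l (l-j)^{k_l}$ — which would also be the step that "hints at a combinatorial interpretation" promised in the introduction.
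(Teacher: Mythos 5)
Your overall plan---expand the operators $\bigl(\sum_{i=1}^{m}(i-j)\frac{\partial}{\partial x_i}\bigr)^{2k}$ from Corollary \ref{coro:link-hypermap-constellation-with-genus} by the multinomial theorem, read off the coefficient attached to each marked-constellation count, then prove integrality via a mod-$m$ residue argument and positivity separately---is the paper's plan, but your closed form for the coefficients is wrong, and the error conceals the one idea that makes both integrality and positivity go through. In your sum $\sum_{j=1}^{m}\prod_{l=1}^{m}(l-j)^{k_l}$ the exponent $k_l$ stays attached to the colour $l$ as $j$ varies, so the $m$ summands are not congruent to one another modulo $m$ and the sum need not be divisible by $m$: for $m=3$ and $(k_1,k_2,k_3)=(1,1,0)$ it equals $0+0+(-2)(-1)=2\not\equiv 0 \pmod 3$, whereas the correct coefficient from Corollary \ref{coro:counting-relation-m-3-4} (namely $\frac{2\cdot 2^{1}+(-1)^{1}}{3}=1$) is an integer. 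The missing step is Proposition \ref{prop:constellation-symmetric}: after the diagonal substitution $[x_i\gets x]$ the series $C^{(g)}$ is symmetric in the $x_i$, so the derivative multi-index produced by root colour $j$ may be cyclically relabelled by $j$ before terms are collected. This is exactly what turns $\prod_{i\neq j}(i-j)^{k_i}$ into $e^{(m),j}_{k_1,\ldots,k_{m-1}}=\prod_{i\neq j}(i-j)^{k_{i-j\bmod m}}$ (Propositions \ref{prop:sum-in-big-D} and \ref{prop:rewrite-with-new-coeff}); only after this regrouping do all $m$ contributions attach to the same monomial $\partial_{x_1}^{k_1}\cdots\partial_{x_{m-1}}^{k_{m-1}}$, have $j$-independent residue mod $m$, and sum to the paper's $d^{(m)}_{k_1,\ldots,k_{m-1}}$. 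Your residue-system intuition is correct, but it applies to the $e^{(m),j}$, not to the sum you wrote down.

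The second gap is positivity. You hope that the evenness of the total degree $2i$ forces the signed contributions to combine into something nonnegative, but there is no such parity mechanism (evenness only enters in making individual terms like $e^{(3),3}=(-2)^{k_1}(-1)^{k_2}$ positive). The paper's argument (Lemma \ref{lem:telescoping-e} and Theorem \ref{thm:positivity-of-differential-operator-coefficient}) is a genuine domination estimate: one first uses the symmetry $j\leftrightarrow m+1-j$ to reduce to $j\le m/2$, then proves the ratio bound $\bigl|e^{(m),j+1}_{k_1,\ldots,k_{m-1}}\bigr|\le \frac{j}{m-j}\bigl|e^{(m),j}_{k_1,\ldots,k_{m-1}}\bigr|$ whenever $k_{m-j}\ge 1$, and finally bounds the total of the possibly negative terms by the single positive term $e^{(m),m-t}$, where $t$ is the largest index with $k_t>0$. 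No symmetric-function or counting interpretation of these coefficients is known---the paper states explicitly that their combinatorial meaning is unclear---so step (iii) of your plan cannot currently be carried out as described; you would need to supply the telescoping estimate (or an equivalent) to close the argument.
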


In the following, we will deal with this problem of positivity of coefficients and prove the two corollaries above. We start from the observation that the series $C^{(g)}(\underline{\textbf{x}},\underline{\textbf{y}},z)$ is symmetric in $x_i$. This can be seen algebraically from the expression of $R_C$ in Proposition \ref{prop:series-in-big-characters-simplified}, or bijectively with a ``topological surgery'' that permutes the order of two consecutive colors (details are left to the readers). We can thus deduce the following property of $C^{(g)}$.

\begin{prop} \label{prop:constellation-symmetric}
Let $k_1, k_2, \ldots, k_m$ be natural numbers and $\sigma \in S_m$ an arbitrary permutation. We have the following equality.
\[ \left( \frac{\partial^{k_1 + \cdots + k_m}}{\partial x_1^{k_1} \cdots \partial x_m^{k_m}} C^{(g)} \right)([x_i \gets x],\underline{\textbf{y}},z) = \left( \frac{\partial^{k_1 + \cdots + k_m}}{\partial x_1^{k_{\sigma(1)}} \cdots \partial x_m^{k_{\sigma(m)}}} C^{(g)} \right)([x_i \gets x],\underline{\textbf{y}},z) \]
\end{prop}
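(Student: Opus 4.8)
The plan is to exploit the symmetry of $C^{(g)}(\underline{\mathbf{x}}, \underline{\mathbf{y}}, z)$ in its variables $x_1, \ldots, x_m$, which was asserted just before the statement. The key observation is that specializing $[x_i \gets x]$ sends all variables to a common value $x$, so after this specialization the only thing that distinguishes a mixed partial derivative $\partial^{k_1+\cdots+k_m}/(\partial x_1^{k_1} \cdots \partial x_m^{k_m})$ is the unordered multiset of orders $\{k_1, \ldots, k_m\}$, not the assignment of which order goes to which variable. First I would make the symmetry precise: because $C^{(g)}$ is invariant under any permutation $\sigma \in S_m$ of its arguments, we have the functional identity $C^{(g)}(x_{\sigma(1)}, \ldots, x_{\sigma(m)}, \underline{\mathbf{y}}, z) = C^{(g)}(x_1, \ldots, x_m, \underline{\mathbf{y}}, z)$ as formal power series.

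Next I would differentiate this identity. Apply the operator $\partial^{k_1+\cdots+k_m}/(\partial x_1^{k_1} \cdots \partial x_m^{k_m})$ to both sides. On the side where the arguments are permuted, the chain rule simply relabels which variable each derivative acts on: differentiating $C^{(g)}(x_{\sigma(1)}, \ldots, x_{\sigma(m)}, \ldots)$ with respect to $x_j$ is the same as differentiating $C^{(g)}$ with respect to its $\sigma^{-1}$-th slot and then substituting. Carefully tracking the indices, this shows that the mixed derivative with orders $(k_1, \ldots, k_m)$ attached to the slots equals the mixed derivative with those same orders permuted by $\sigma$, both evaluated before specialization. Since this holds as an identity of power series in $x_1, \ldots, x_m$, I can then specialize $[x_i \gets x]$ on both sides, which yields exactly the claimed equality
\[
\left( \frac{\partial^{k_1 + \cdots + k_m}}{\partial x_1^{k_1} \cdots \partial x_m^{k_m}} C^{(g)} \right)([x_i \gets x], \underline{\mathbf{y}}, z) = \left( \frac{\partial^{k_1 + \cdots + k_m}}{\partial x_1^{k_{\sigma(1)}} \cdots \partial x_m^{k_{\sigma(m)}}} C^{(g)} \right)([x_i \gets x], \underline{\mathbf{y}}, z).
\]

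The main obstacle, and the only subtle point, is the bookkeeping of indices when commuting the permutation of arguments past the differentiation: one must verify that permuting the exponents $k_i$ in the derivative symbol genuinely corresponds to permuting the arguments of the symmetric function, and that the specialization step is legitimate. Since all partial derivatives commute and $C^{(g)}$ is a formal power series in finitely many $x_i$ (for each fixed total degree), there are no convergence or ordering issues, and differentiation commutes cleanly with the permutation action. I expect the cleanest exposition to phrase the whole argument in terms of the invariance of $C^{(g)}$ under the $S_m$-action on slots, so that the derivative identity follows by a single application of the chain rule; the final specialization is then immediate because it identifies all the variables and thus cannot detect the relabeling.
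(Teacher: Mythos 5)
Your proof is correct and takes essentially the same approach as the paper: both arguments rest on the symmetry of $C^{(g)}$ in the variables $x_1,\ldots,x_m$ together with the observation that the specialization $[x_i \gets x]$ identifies all the variables, so only the multiset of derivative orders matters. The paper compresses your chain-rule bookkeeping into a single sentence, but the underlying reasoning is identical.
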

\begin{proof}
Since in the evaluation all $x_i$ are given value $x$, any interchange of variables $x_i$ in the series has no effect on the evaluation.
\end{proof}

We now define a new sequence of differential operators $D^{(2k)}$. For $m = 2p$ even, we define
\[D^{(2k)} = 2 \sum_{j=1}^{p} \left( \sum_{i=1}^{m} (i-j)\frac{\partial}{\partial x_i} \right)^{2k}.\]
For $m = 2p + 1$ odd, we define
\[D^{(2k)} = \left( \sum_{i=1}^{m} (i-p-1)\frac{\partial}{\partial x_i} \right)^{2k} + 2\sum_{j=1}^{p} \left( \sum_{i=1}^{m} (i-j)\frac{\partial}{\partial x_i} \right)^{2k}. \]
Using these differential operators, we can rewrite the equations in Corollary \ref{coro:link-hypermap-constellation-with-genus} as follows.

\begin{prop} \label{prop:sum-in-big-D}
We have the following equation.
\[ \left( \sum_{j=1}^{m} \left( \sum_{i=1}^{m} (i-j)\frac{\partial}{\partial x_i} \right)^{2k} C^{(g - k)} \right)([x_i \gets x], \underline{\textbf{y}}, z) = \left( D^{(2k)} C^{(g - k)} \right) ([x_i \gets x], \underline{\textbf{y}}, z) \]
\end{prop}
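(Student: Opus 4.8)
The plan is to prove the stated identity by exploiting the symmetry of $C^{(g)}$ in its color-variables $x_1, \ldots, x_m$, established in Proposition~\ref{prop:constellation-symmetric}. The left-hand side involves the sum over all $j$ from $1$ to $m$ of the operator $\left(\sum_{i=1}^m (i-j)\frac{\partial}{\partial x_i}\right)^{2k}$, whereas the operator $D^{(2k)}$ only ranges $j$ over $1,\ldots,p$ (with weight $2$, plus a central term when $m$ is odd). The key observation is that the summand indexed by $j$ and the summand indexed by $m+1-j$ contribute identically after evaluation at $[x_i \gets x]$, so the full sum over $j$ collapses to the weighted half-sum defining $D^{(2k)}$.

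The first step is to make this pairing precise. For a fixed $j$, consider the linear form $\sum_{i=1}^m (i-j)\frac{\partial}{\partial x_i}$ raised to the power $2k$; expanding it produces a sum of monomials in the partial derivatives $\frac{\partial}{\partial x_i}$ with coefficients that are products of factors $(i-j)$. The operator indexed by $j' = m+1-j$ has coefficients built from factors $(i - (m+1-j)) = (i-m-1+j)$. I would then apply the index substitution $i \mapsto m+1-i$ (which sends the color $i$ to the color $m+1-i$): under this relabeling, the factor $(i-j)$ in the $j$-summand corresponds to $((m+1-i)-j) = -(i-(m+1-j))$, i.e.\ to the negative of the corresponding factor in the $j'$-summand. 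Because the power is even ($2k$), the accumulated sign $(-1)^{2k}=1$ cancels, so the $j$-summand and the $j'$-summand are carried into one another by the permutation $\sigma: i \mapsto m+1-i$ of the variables $x_i$.

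The second step invokes Proposition~\ref{prop:constellation-symmetric}: since all $x_i$ are evaluated at the common value $x$, permuting the variables by $\sigma$ leaves the evaluation of $C^{(g-k)}$ unchanged. Hence the evaluation of the $j$-summand equals that of the $(m+1-j)$-summand. For $m=2p$ even, the indices $j \in \{1,\ldots,m\}$ pair up as $\{j, m+1-j\}$ into $p$ pairs with no fixed point, so $\sum_{j=1}^m = 2\sum_{j=1}^p$, matching the definition of $D^{(2k)}$. For $m=2p+1$ odd, the central index $j = p+1$ is its own partner under $j \mapsto m+1-j$, so it contributes once (giving the lone term $\left(\sum_i (i-p-1)\frac{\partial}{\partial x_i}\right)^{2k}$), while the remaining $2p$ indices pair into $p$ pairs contributing with weight $2$; this is exactly the odd-case definition of $D^{(2k)}$.

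The main obstacle I anticipate is bookkeeping the sign and the variable relabeling simultaneously: one must verify carefully that the substitution $i \mapsto m+1-i$ genuinely realizes the action of a permutation $\sigma \in S_m$ on the derivative operators (so that Proposition~\ref{prop:constellation-symmetric} applies term-by-term to the expanded monomials), and that the evenness of the exponent is precisely what neutralizes the sign introduced by $(i-j) \mapsto -(i-(m+1-j))$. Once this correspondence is checked, the identity follows by grouping the $m$ summands into the half-sum plus, in the odd case, the fixed central term, with no further computation required.
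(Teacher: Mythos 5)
Your proposal is correct and follows essentially the same route as the paper: both arguments pair the summand indexed by $j$ with the one indexed by $m+1-j$ via the variable relabeling $i \mapsto m+1-i$, use the evenness of the exponent $2k$ to absorb the resulting sign, and invoke Proposition~\ref{prop:constellation-symmetric} to conclude that the two summands agree after evaluation at $[x_i \gets x]$, whence the sum over $j=1,\ldots,m$ collapses to the weighted half-sum defining $D^{(2k)}$ (with the self-paired central term $j=p+1$ when $m$ is odd). No gaps; the bookkeeping you flag as the main obstacle is handled exactly as you describe.
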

\begin{proof}
We observe that, according to Proposition \ref{prop:constellation-symmetric}, for any $g$ and $j$,
\begin{align*}
&\quad \left( \left( \sum_{i=1}^{m} (i-j)\frac{\partial}{\partial x_i} \right)^{2k} C^{(g)} \right)([x_i \gets x], \underline{\textbf{y}}, z) \\
&=  \left( \left( \sum_{i=1}^{m} (-(m+1-i) + j)\frac{\partial}{\partial x_i} \right)^{2k} C^{(g)} \right) ([x_i \gets x], \underline{\textbf{y}}, z) \\
&=  \left( \left( \sum_{i=1}^{m} (i - (m+1-j))\frac{\partial}{\partial x_i} \right)^{2k} C^{(g)} \right) ([x_i \gets x], \underline{\textbf{y}}, z).
\end{align*}
With this equality, the proposition is easily verified.
\end{proof}

We define two kinds of coefficients $e^{(m),j}_{k_1, \ldots, k_{m-1}}$ and $d^{(m)}_{k_1, \ldots, k_{m-1}}$ for $k_i \geq 0$ as follows:
\[ e^{(m),j}_{k_1, \ldots, k_{m-1}} = \prod_{1 \leq i \leq m, i \neq j} (i-j)^{k_{i-j \mod m}}, \]
\[ d^{(m)}_{k_1, \ldots, k_{m-1}} = \begin{cases} 2\sum_{j=1}^{p} e^{(m),j}_{k_1, \ldots, k_{m-1}}, &(m=2p) \\ e^{(m),p+1}_{k_1, \ldots, k_{m-1}} + 2\sum_{j=1}^{p} e^{(m),j}_{k_1, \ldots, k_{m-1}}, &(m=2p+1) \end{cases}. \]

We can now rewrite the equation in Proposition \ref{prop:sum-in-big-D} with these coefficients.

\begin{prop} \label{prop:rewrite-with-new-coeff}
For any $m \geq 2$, we have
\begin{align*}
&\quad (D^{(2k)} C^{(g)}) ([x_i \gets x],\underline{\textbf{y}},z) \\
&= \left( \sum_{k_1 + \ldots + k_{m-1} = 2k} \binom{2k}{k_1, \ldots, k_{m-1}} d^{(m)}_{k_1, \ldots, k_{m-1}} \frac{\partial^{2k}}{\partial x_1^{k_1} \cdots \partial x_{m-1}^{k_{m-1}}} C^{(g)} \right) ([x_i \gets x],\underline{\textbf{y}},z).
\end{align*}
\end{prop}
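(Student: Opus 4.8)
The plan is to expand each power $\left( \sum_{i=1}^{m} (i-j)\frac{\partial}{\partial x_i} \right)^{2k}$ occurring in the definition of $D^{(2k)}$ by the multinomial theorem, to recognize the resulting scalar coefficients as the quantities $e^{(m),j}_{k_1,\ldots,k_{m-1}}$, and then to collect the contributions of all admissible $j$ so as to assemble $d^{(m)}_{k_1,\ldots,k_{m-1}}$. Throughout I would write $\partial_i = \partial/\partial x_i$ for brevity, and I would rely crucially on the symmetry of $C^{(g)}$ recorded in Proposition \ref{prop:constellation-symmetric}.

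First I would fix $j$ with $1 \le j \le m$ and expand
\[ \left( \sum_{i=1}^{m} (i-j)\partial_i \right)^{2k} = \sum_{a_1 + \cdots + a_m = 2k} \binom{2k}{a_1, \ldots, a_m} \left( \prod_{i=1}^{m} (i-j)^{a_i} \right) \partial_1^{a_1} \cdots \partial_m^{a_m}. \]
The key observation is that the factor $(j-j)^{a_j} = 0^{a_j}$ forces $a_j = 0$, so only compositions of $2k$ with $a_j = 0$ survive. For such a composition the exponents $(a_i)_{i \neq j}$ are indexed by $\{1,\ldots,m\}\setminus\{j\}$, and the map $i \mapsto (i-j) \bmod m$ is a bijection of this set onto $\{1,\ldots,m-1\}$. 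I would therefore relabel by setting $a_i = k_{(i-j)\bmod m}$; this is a bijection between the surviving compositions and the tuples $(k_1,\ldots,k_{m-1})$ with $k_1 + \cdots + k_{m-1} = 2k$. Under it the product of signs becomes $\prod_{i \neq j}(i-j)^{k_{(i-j)\bmod m}} = e^{(m),j}_{k_1,\ldots,k_{m-1}}$ by definition, while the multinomial coefficient, being symmetric in its arguments and having $a_j=0$, becomes $\binom{2k}{k_1,\ldots,k_{m-1}}$.

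Next I would handle the derivative. Since $a_j=0$, the monomial $\partial_1^{a_1}\cdots\partial_m^{a_m}$ carries the exponent multiset $\{k_1,\ldots,k_{m-1},0\}$; by Proposition \ref{prop:constellation-symmetric}, after applying it to $C^{(g)}$ and evaluating at $[x_i \gets x]$ the result depends only on this multiset, hence equals the evaluation of $\frac{\partial^{2k}}{\partial x_1^{k_1}\cdots\partial x_{m-1}^{k_{m-1}}}C^{(g)}$, which is exactly the form on the right-hand side. Combining the three reductions yields, for each fixed $j$,
\[ \left( \left( \sum_{i=1}^m (i-j)\partial_i \right)^{2k} C^{(g)} \right)([x_i \gets x],\underline{\mathbf{y}},z) = \sum_{k_1 + \cdots + k_{m-1} = 2k} \binom{2k}{k_1,\ldots,k_{m-1}} e^{(m),j}_{k_1,\ldots,k_{m-1}} \left( \frac{\partial^{2k}}{\partial x_1^{k_1}\cdots\partial x_{m-1}^{k_{m-1}}} C^{(g)} \right)([x_i \gets x],\underline{\mathbf{y}},z). \]
Finally I would substitute this into the definition of $D^{(2k)}$, treating the even case $m=2p$ (weight $2$ on each $j=1,\ldots,p$) and the odd case $m=2p+1$ (weight $2$ on $j=1,\ldots,p$ plus weight $1$ on the central $j=p+1$) separately; in both cases the weighted sum of the $e^{(m),j}$ over $j$ is exactly $d^{(m)}_{k_1,\ldots,k_{m-1}}$ by its definition, giving the claimed identity.

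The main obstacle I anticipate is bookkeeping rather than conceptual: one must check that the modular relabeling $i\mapsto (i-j)\bmod m$ matches precisely the index convention in the definition of $e^{(m),j}$, and that the passage from an $m$-variable mixed derivative with a zero in slot $j$ to the $(m-1)$-variable derivative in slots $1,\ldots,m-1$ is legitimate. Both points rest entirely on the symmetry of Proposition \ref{prop:constellation-symmetric} and are the only places where genuine care is required.
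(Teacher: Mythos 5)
Your proposal is correct and follows essentially the same route as the paper: multinomial expansion of each $\bigl(\sum_i (i-j)\partial_i\bigr)^{2k}$, elimination of the slot $i=j$ via the vanishing factor, relabelling by $i\mapsto (i-j)\bmod m$ using the symmetry of Proposition \ref{prop:constellation-symmetric}, and summation over $j$ with the parity-dependent weights to assemble $d^{(m)}_{k_1,\ldots,k_{m-1}}$. The paper's own proof performs exactly these steps (in a slightly different order, relabelling before discarding the zero slot), so no further comment is needed.
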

\begin{proof}
For $m=2p$,
\begin{align*}
&\quad (D^{(2k)} C^{(g)}) ([x_i \gets x],\underline{\textbf{y}},z) \\
&= 2 \left( \sum_{j=1}^{p} \left( \sum_{i=1}^{m} (i-j)\frac{\partial}{\partial x_i} \right)^{2k} C^{(g)} \right) ([x_i \gets x],\underline{\textbf{y}},z) \\
&= 2 \left( \sum_{j=1}^{p} \sum_{k_1 + \ldots + k_m = 2k} \binom{2k}{k_1, \ldots, k_m} \left( \prod_{i=1}^{m} (i-j)^{k_{i}} \right) \frac{\partial^{2k}}{\partial x_1^{k_1} \cdots \partial x_m^{k_m}} C^{(g)} \right) ([x_i \gets x],\underline{\textbf{y}},z) \\
&= 2 \left( \sum_{k_1 + \ldots + k_m = 2k} \binom{2k}{k_1, \ldots, k_m} \left( \sum_{j=1}^{p} \prod_{i=1}^{m} (i-j)^{k_{i-j\mod m}} \right) \frac{\partial^{2k}}{\partial x_1^{k_1} \cdots \partial x_m^{k_m}} C^{(g)} \right) ([x_i \gets x],\underline{\textbf{y}},z) \\
&= \left( \sum_{k_1 + \ldots + k_{m-1} = 2k} \binom{2k}{k_1, \ldots, k_{m-1}} d^{(m)}_{k_1, \ldots, k_{m-1}} \frac{\partial^{2k}}{\partial x_1^{k_1} \cdots \partial x_{m-1}^{k_{m-1}}} C^{(g)} \right) ([x_i \gets x],\underline{\textbf{y}},z)
\end{align*}

The computation for $m=2p+1$ is similar.
\end{proof}

We will now show that the coefficients $d^{(m)}_{k_1, \ldots, k_{m-1}}$ are all positive integers divisible by $m$. We start with a lemma concerning $e^{(m),j}_{k_1, \ldots, k_{m-1}}$, which we will use for telescoping.

\begin{lem} \label{lem:telescoping-e}
For $j \leq m/2$, we have $\left| e^{(m),j+1}_{k_1, \ldots, k_{m-1}} \right| \leq \left| e^{(m),j}_{k_1, \ldots, k_{m-1}}\right| $.

Moreover, if $k_{m-j} \geq 1$, we have $\left| e^{(m),j+1}_{k_1, \ldots, k_{m-1}} \right| \leq \frac{j}{m-j} \left| e^{(m),j}_{k_1, \ldots, k_{m-1}} \right| $.
\end{lem}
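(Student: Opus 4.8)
The plan is to obtain an exact formula for the ratio $|e^{(m),j+1}_{k_1, \ldots, k_{m-1}}|/|e^{(m),j}_{k_1, \ldots, k_{m-1}}|$ and read off both bounds from it. First I would rewrite $|e^{(m),j}_{k_1, \ldots, k_{m-1}}|$ as a product indexed by residues rather than by $i$. As $i$ runs over $\{1, \ldots, m\} \setminus \{j\}$, the residue $r := (i-j \bmod m)$ runs bijectively over $\{1, \ldots, m-1\}$, and the corresponding exponent is exactly $k_r$. The only subtlety is the base $|i-j|$: if $r \leq m-j$ then $i = j+r$ and $|i-j| = r$, whereas if $r > m-j$ then $i = j+r-m$ and $|i-j| = m-r$. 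Hence
\[ \left| e^{(m),j}_{k_1, \ldots, k_{m-1}} \right| = \prod_{r=1}^{m-j} r^{k_r} \prod_{r=m-j+1}^{m-1} (m-r)^{k_r}. \]

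Next I would compare this expression for $j$ and for $j+1$. Writing out $|e^{(m),j+1}_{k_1, \ldots, k_{m-1}}|$ by the same rule, the base equals $r$ for $r \leq m-j-1$ and equals $m-r$ for $r \geq m-j$. Comparing term by term with the formula above shows that every factor agrees except at the single residue $r = m-j$, where the base drops from $m-j$ (in $e^{(m),j}$) to $j$ (in $e^{(m),j+1}$). Consequently
\[ \frac{\left| e^{(m),j+1}_{k_1, \ldots, k_{m-1}} \right|}{\left| e^{(m),j}_{k_1, \ldots, k_{m-1}} \right|} = \left( \frac{j}{m-j} \right)^{k_{m-j}}. \]
This is the crux of the argument, and it also explains why the hypothesis on $k_{m-j}$ in the second bound is precisely the exponent appearing here.

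Finally, both inequalities follow from the assumption $j \leq m/2$, which gives $j \leq m-j$ and hence $0 < \frac{j}{m-j} \leq 1$. For any $k_{m-j} \geq 0$ the ratio is therefore at most $1$, proving the first bound. If moreover $k_{m-j} \geq 1$, then raising a number in $(0,1]$ to a power at least $1$ only decreases it, so the ratio is at most $\frac{j}{m-j}$, which proves the second bound.

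I do not anticipate a serious obstacle: the entire content is the clean cancellation of all but one factor in the ratio. The one place demanding care is the bookkeeping of the modular index $i-j \bmod m$ and the resulting case split $r \leq m-j$ versus $r > m-j$; getting the boundary residue $r = m-j$ right is exactly what pins down both the base change $m-j \mapsto j$ and the appearance of the exponent $k_{m-j}$ in the statement.
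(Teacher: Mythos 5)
Your proposal is correct and follows essentially the same route as the paper: the paper's proof consists precisely of the identity $e^{(m),j+1}_{k_1,\ldots,k_{m-1}} = \bigl( \tfrac{-j}{m-j} \bigr)^{k_{m-j}} e^{(m),j}_{k_1,\ldots,k_{m-1}}$ (stated without derivation), of which your ratio formula is the absolute-value version, and both bounds then follow from $j \leq m-j$ exactly as you argue. Your careful bookkeeping of the residue $r = i-j \bmod m$ simply supplies the verification that the paper leaves implicit.
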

\begin{proof}
The result is a direct consequence of the following formula coming from the definition of $e^{(m),j}_{k_1, \ldots, k_{m-1}}$:
\[ e^{(m),j+1}_{k_1, \ldots, k_{m-1}} = \left( \frac{-j}{m-j} \right)^{k_{m-j}} e^{(m),j}_{k_1, \ldots, k_{m-1}}. \]
\end{proof}

We can now prove the positivity of $d^{(m)}_{k_1, \ldots, k_{m-1}}$.

\begin{thm} \label{thm:positivity-of-differential-operator-coefficient}
For all $m \geq 2$ and $k_1, \ldots, k_{m-1}$ natural numbers, $d^{(m)}_{k_1, \ldots, k_{m-1}}$ is a positive integer. Moreover, it is always divisible by $m$.
\end{thm}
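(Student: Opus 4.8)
The plan is to prove the three assertions — integrality, divisibility by $m$, and positivity — in increasing order of difficulty, the first two being essentially formal and the last being the real content. Integrality is immediate: in $e^{(m),j}_{k_1,\ldots,k_{m-1}}=\prod_{i\neq j}(i-j)^{k_{i-j\bmod m}}$ every base $i-j$ is an integer and every exponent is a nonnegative integer, so each $e^{(m),j}$ is an integer and hence so is the weighted sum $d^{(m)}_{k_1,\ldots,k_{m-1}}$. For divisibility I would exploit the congruence $i-j\equiv (i-j\bmod m)\pmod m$: as $i$ runs over $\{1,\dots,m\}\setminus\{j\}$ the residues $(i-j)\bmod m$ run exactly once over $\{1,\dots,m-1\}$, so modulo $m$ each factor collapses and
\[ e^{(m),j}_{k_1,\ldots,k_{m-1}}\equiv\prod_{l=1}^{m-1}l^{k_l}\pmod m \]
\emph{independently of $j$}. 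Since the weights in the definition of $d^{(m)}$ sum to $2p=m$ when $m=2p$ and to $1+2p=m$ when $m=2p+1$, this gives $d^{(m)}_{k_1,\ldots,k_{m-1}}\equiv m\prod_{l}l^{k_l}\equiv 0\pmod m$.

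For positivity I would first factor out the manifestly positive term $e^{(m),1}=\prod_{l=1}^{m-1}l^{k_l}>0$. Setting $\rho_j:=e^{(m),j}/e^{(m),1}$, the ratio formula in Lemma~\ref{lem:telescoping-e} gives $\rho_{j+1}=r_j\rho_j$ with $r_j=(-j/(m-j))^{k_{m-j}}$, so that $\rho_1=1$ and $d^{(m)}=e^{(m),1}\,S$, where $S=2\sum_{j=1}^{p}\rho_j$ for even $m$ and $S=2\sum_{j=1}^{p}\rho_j+\rho_{p+1}$ for odd $m$; it therefore suffices to show $S>0$. The decisive feature supplied by Lemma~\ref{lem:telescoping-e} is the dichotomy $r_j=+1$ when $k_{m-j}=0$, versus $|r_j|\le j/(m-j)<1$ (with sign $(-1)^{k_{m-j}}$) when $k_{m-j}\ge 1$: the only way $\rho_{j+1}$ can differ from $\rho_j$ — and in particular the only way a sign can flip — is accompanied by a strict shrinkage of magnitude.

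The main obstacle is precisely the cancellation among the $\rho_j$, which need not all be positive. I would resolve it by a block decomposition along the ``breakpoints'' $t_1<\cdots<t_r$, the indices $j$ with $k_{m-j}\ge1$. Between consecutive breakpoints $r_j=1$, so $\rho_j$ is constant on each block: it equals $1$ on $[1,t_1]$ and $v_i:=\prod_{s\le i}r_{t_s}$ on the block following $t_i$, and the shrinkage gives $|v_i|\le|v_1|=|r_{t_1}|\le t_1/(m-t_1)$. The first block then contributes a positive amount, while the total of all later blocks is bounded in absolute value by $|v_1|$ times the remaining weight, whose block lengths telescope. For even $m=2p$ this yields $\sum_{j=1}^{p}\rho_j\ge t_1-\tfrac{t_1}{m-t_1}(p-t_1)=\tfrac{t_1 p}{2p-t_1}>0$, and for odd $m=2p+1$ the analogous estimate gives $S\ge \tfrac{t_1 m}{m-t_1}>0$; the degenerate case of no breakpoints gives $S=m>0$ directly. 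I expect the only delicate point to be bookkeeping the weights and block lengths in the telescoping estimate correctly (especially the asymmetric weight $1$ on the last term $\rho_{p+1}$ in the odd case); the quantitative bound $|r_{t_1}|\le t_1/(m-t_1)$ of Lemma~\ref{lem:telescoping-e} is exactly what makes the first block dominate and is the crux of the whole argument.
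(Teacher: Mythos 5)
Your proposal is correct and takes essentially the same approach as the paper: the divisibility argument (the residue of $e^{(m),j}_{k_1,\ldots,k_{m-1}}$ modulo $m$ is independent of $j$, and the weights sum to $m$) is the paper's, and your positivity argument---the initial block of equal positive terms up to the first breakpoint $t_1=m-t$, followed by the remaining terms contracted by the factor $t_1/(m-t_1)$ from Lemma~\ref{lem:telescoping-e}---is exactly the paper's telescoping estimate, merely normalized by $e^{(m),1}$. Your final bounds (e.g.\ $\tfrac{t_1 p}{2p-t_1}$ for even $m$) agree with the paper's $\tfrac{2p(m-t)}{t}e^{(m),m-t}$ up to that normalization.
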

\begin{proof}
By definition, $d^{(m)}_{k_1, \ldots, k_{m-1}}$ is an integer.

For the case $k_1 = \cdots = k_{m-1} = 0$, we have $d^{(m)}_{0, \ldots, 0} > 0$ by definition. We now suppose that all $k_i$ are not zero. Let $t$ be the largest index such that $k_t > 0$, we then have $k_{t+1} = \cdots = k_{m-1} = 0$. If $t \leq m/2 $, we have $d^{(m)}_{k_1, \ldots, k_{m-1}} > 0$, since all terms of the sum in the definition are positive. We now suppose that $t > p$ for $m=2p$ and $m=2p+1$. We always have $e^{(m),m-t}_{k_1, \ldots, k_{m-1}} \geq t^{k_t} > 0$ in this case.

We start from the case $m=2p$. According to Lemma \ref{lem:telescoping-e}, we have
\begin{align*}
d^{(m)}_{k_1, \ldots, k_{m-1}} &= 2\sum_{j=1}^{p} e^{(m),j}_{k_1, \ldots, k_{m-1}} \\
&\geq 2(m-t)e^{(m),m-t}_{k_1, \ldots, k_{m-1}} - 2(t-p) \left| e^{(m),m-t+1}_{k_1, \ldots, k_{m-1}} \right| \\
&\geq 2(m-t) \left( e^{(m),m-t}_{k_1, \ldots, k_{m-1}} - \left( 1-\frac{p}{t} \right) e^{(m),m-t}_{k_1, \ldots, k_{m-1}} \right) \\
&= \frac{2p(m-t)}{t}e^{(m),m-t}_{k_1, \ldots, k_{m-1}} > 0.
\end{align*}

The computation for the case $m=2p+1$ is similar.
\begin{align*}
d^{(m)}_{k_1, \ldots, k_{m-1}} &= e^{(m),p+1}_{k_1, \ldots, k_{m-1}} + 2\sum_{j=1}^{p} e^{(m),j}_{k_1, \ldots, k_{m-1}} \\
&\geq 2(m-t)e^{(m),m-t}_{k_1, \ldots, k_{m-1}} - (2t-2p+1) \left| e^{(m),m-t+1}_{k_1, \ldots, k_{m-1}} \right| \\
&\geq (m-t) \left( 2e^{(m),m-t}_{k_1, \ldots, k_{m-1}} - \left( 2-\frac{2p-1}{t} \right) e^{(m),m-t}_{k_1, \ldots, k_{m-1}} \right) \\
&= \frac{(2p-1)(m-t)}{t} e^{(m),m-t}_{k_1, \ldots, k_{m-1}} > 0
\end{align*}

The positivity to be proved follows from the computation above.

For divisibility by $m$, we only need to observe that the value of $e^{(m),j}_{k_1, \ldots, k_{m-1}}$ modulo $m$ does not depend on $j$, and that $d^{(m)}_{k_1, \ldots, k_{m-1}}$ is the sum of $m$ such coefficients.
\end{proof}

We define $C^{(g, k_1, \ldots, k_{m-1})}_{n,m,D}(\underline{\textbf{y}})$ as the series of $m$-constellations of genus $g$ with $n$ hyperedges, $k_i$ marked vertices of each color $i$ and $D$ as restriction on degree of hyperfaces. We define similarly $H^{(g)}_{n,m,D}(\underline{\textbf{y}})$. They are the series versions of numbers $C^{(g, k_1, \ldots, k_{m-1})}_{n,m,D}$ and $H^{(g)}_{n,m,D}$, with $y_i$ marking the degree of each hyperfaces.

We can now rewrite Corollary \ref{coro:link-hypermap-constellation-with-genus-extended} into a more agreable form.

\begin{coro} \label{coro:explicit-coeffs}
We have the following relation, with all coefficients positive integers.
\begin{displaymath}
H^{(g)}_{n,m,D}(\underline{\textbf{y}}) = \sum_{i=0}^{g} m^{2g-2i} \sum_{\substack{k_1, \ldots, k_{m-1} \geq 0 \\ k_1 + \cdots + k_{m-1} = 2i}} c^{(m)}_{k_1, \ldots, k_{m-1}} C^{(g-i, k_1, \ldots, k_{m-1})}_{n,m,D}(\underline{\textbf{y}})
\end{displaymath}
Here, $c^{(m)}_{k_1, \ldots, k_{m-1}} = m^{-1} d^{(m)}_{k_1, \ldots, k_{m-1}}$.
\end{coro}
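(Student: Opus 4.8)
The plan is to feed the differential-operator identities of Propositions~\ref{prop:sum-in-big-D} and~\ref{prop:rewrite-with-new-coeff} into the second equation of Corollary~\ref{coro:link-hypermap-constellation-with-genus-extended} and then read off the coefficients combinatorially. Starting from
\[ H_D^{(g)}(x,\underline{\mathbf{y}},z) = \sum_{k=0}^{g} \frac{m^{2g-2k}}{m(2k)!} \Bigg( \sum_{j=1}^{m} \bigg(\sum_{i=1}^{m} (i-j)\frac{\partial}{\partial x_i} \bigg)^{2k} C_D^{(g - k)} \Bigg)([x_i \gets x], \underline{\mathbf{y}}, z), \]
I would first replace the inner sum over $j$ by the operator $D^{(2k)}$ using Proposition~\ref{prop:sum-in-big-D}, and then expand $D^{(2k)} C_D^{(g-k)}$ by Proposition~\ref{prop:rewrite-with-new-coeff}. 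Note that the latter is stated for $C^{(g)}$ but uses only the symmetry of the series in the variables $x_i$ (Proposition~\ref{prop:constellation-symmetric}), a property preserved by the degree restriction $D$, so it applies verbatim to $C_D^{(g)}$.

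The key simplification is that the multinomial coefficient produced by Proposition~\ref{prop:rewrite-with-new-coeff} cancels the factorial prefactor: since $\frac{1}{(2k)!}\binom{2k}{k_1,\ldots,k_{m-1}} = \frac{1}{k_1!\cdots k_{m-1}!}$ whenever $k_1+\cdots+k_{m-1}=2k$, the two steps combine to give
\[ H_D^{(g)}(x,\underline{\mathbf{y}},z) = \sum_{k=0}^{g} \frac{m^{2g-2k}}{m} \sum_{k_1+\cdots+k_{m-1}=2k} d^{(m)}_{k_1,\ldots,k_{m-1}} \frac{1}{k_1!\cdots k_{m-1}!}\frac{\partial^{2k}}{\partial x_1^{k_1}\cdots\partial x_{m-1}^{k_{m-1}}} C_D^{(g-k)} \bigg|_{[x_i\gets x]}. \]
It then remains to identify each normalized derivative with a marked-constellation series. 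Writing a generic monomial of $C_D^{(g-k)}$ as $x_1^{v_1}\cdots x_m^{v_m} y_\phi z^f$, where $v_i$ is the number of vertices of color $i$, I would use $\frac{1}{k_i!}\frac{\partial^{k_i}}{\partial x_i^{k_i}} x_i^{v_i}\big|_{x_i=1} = \binom{v_i}{k_i}$, so that differentiating $k_i$ times in $x_i$ for $i=1,\ldots,m-1$, dividing by $k_i!$, setting all $x_i = x = 1$, and extracting $[z^n]$ counts exactly the ways to mark $k_i$ vertices of color $i$. Hence this normalized derivative equals $C^{(g-k,k_1,\ldots,k_{m-1})}_{n,m,D}(\underline{\mathbf{y}})$.

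Finally, setting $c^{(m)}_{k_1,\ldots,k_{m-1}} = m^{-1}d^{(m)}_{k_1,\ldots,k_{m-1}}$ and relabeling $k$ as $i$ yields the stated relation. The positivity and integrality of these coefficients is exactly Theorem~\ref{thm:positivity-of-differential-operator-coefficient}: $d^{(m)}_{k_1,\ldots,k_{m-1}}$ is a positive integer divisible by $m$, so $c^{(m)}_{k_1,\ldots,k_{m-1}}$ is a positive integer. The genuinely substantive input is this positivity-and-divisibility statement, already established through the telescoping estimate of Lemma~\ref{lem:telescoping-e}; everything else here is bookkeeping. The one point requiring care — and the place where a slip would be easy — is the combinatorial translation of the previous paragraph: one must differentiate in the individual $x_i$ \emph{before} collapsing $[x_i\gets x]$, and confirm that the unordered-marking convention in the definition of $C^{(g,k_1,\ldots,k_{m-1})}_{n,m,D}$ matches the binomial $\binom{v_i}{k_i}$ rather than a falling factorial; keeping the $1/k_i!$ normalization attached to each derivative is precisely what makes this identification exact.
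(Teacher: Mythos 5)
Your proposal is correct and follows essentially the same route as the paper: it feeds Propositions \ref{prop:sum-in-big-D} and \ref{prop:rewrite-with-new-coeff} into Corollary \ref{coro:link-hypermap-constellation-with-genus} (restricted by $D$), identifies the normalized derivatives with marked-constellation counts, and invokes Theorem \ref{thm:positivity-of-differential-operator-coefficient} for positivity and integrality. The only difference is that you make explicit the bookkeeping $\frac{1}{(2k)!}\binom{2k}{k_1,\ldots,k_{m-1}} = \frac{1}{k_1!\cdots k_{m-1}!}$ and the ordered-versus-unordered marking convention, which the paper's proof leaves implicit in the remark that successive differentiation in $x_i$ marks vertices of color $i$ with order.
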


\begin{proof}
Since according to Theorem \ref{thm:positivity-of-differential-operator-coefficient}, $d^{(m)}_{k_1, \ldots, k_{m-1}}$ is a positive integer divisible by $m$, $c^{(m)}_{k_1, \ldots, k_{m-1}}$ is always a positive integer.

The corollary now follows directly from Corollary \ref{coro:link-hypermap-constellation-with-genus} and Proposition \ref{prop:sum-in-big-D}, \ref{prop:rewrite-with-new-coeff}, with the observation that successive derivation of $x_i$ means marking vertices of color $i$ with order, and the restriction imposed by $D$ can be established by specifying $y_i=0$ for any $i \notin D$.
\end{proof}

We have thus proved a generalization of Corollary \ref{coro:general-counting-relation-in-numbers}. The corollary above actually tells us that Corollary \ref{coro:general-counting-relation-in-numbers} holds even if we refine by the degree sequence of hyperfaces. We can obtain Corollary \ref{coro:general-counting-relation-in-numbers} by specifying all $y_i$ to $1$. 

Our result could possibly hint a combinatorial bijection between $m$-hypermaps and some families of $m$-constellations with markings that preserves the degree sequence of hyperfaces, although this might be hindered by the fact that the combinatorial meaning of the coefficient $c^{(m)}_{k_1, \ldots, k_{m-1}}$ itself is not clear. A combinatorial understanding of these coefficients might be indispensable in the quest of the hinted bijection.

\section*{Acknowledgement}

I would like to thank my supervisors, Guillaume Chapuy and Sylvie Corteel from LIAFA, for proofreading and inspiring discussion on both the form and the content of this article.

\bibliographystyle{alpha}
\bibliography{chara-decomp-fang}
\end{document}